\newcommand{\mA}{\mathcal{A}}
\newcommand{\rd}{\mathrm{d}}
\newcommand{\mL}{\mathcal{L}}
\newcommand{\mK}{\mathcal{K}}
\newtheorem{theorem}{Theorem}[section]
\newtheorem{corollary}[theorem]{Corollary}
\newtheorem{lemma}[theorem]{Lemma}
\newtheorem{proposition}[theorem]{Proposition}
\newtheorem{remark}[theorem]{Remark}
\numberwithin{equation}{section}
\begin{document}
\title{Global Boundedness Induced by Asymptotically Non-degenerate Motility in a Fully Parabolic Chemotaxis Model with Local Sensing} 
\thanks{\texttt{orcid. JJ: 0000-0003-1452-1651, PhL: 0000-0003-3091-8085}}
\author{Jie Jiang}
\address{Innovation Academy for Precision Measurement Science and Technology, Chinese Academy of Sciences, Wuhan 430071, HuBei Province, P.R. China}
\email{jiang@apm.ac.cn, jiang@wipm.ac.cn}
	
\author{Philippe Lauren\c{c}ot}
\address{Laboratoire de Math\'ematiques (LAMA) UMR~5127, Universit\'e Savoie Mont Blanc, CNRS\\
F--73000 Chamb\'ery, France}
\email{philippe.laurencot@univ-smb.fr}

\keywords{chemotaxis - chemorepulsion  - global boundedness - comparison -  energy estimates}
\subjclass{35K51 - 35K40 - 35K55 - 35A01 - 35B40}
	
\date{\today}
	
\begin{abstract}
A fully parabolic chemotaxis model  of Keller--Segel type with local sensing is considered. The system features a signal-dependent asymptotically non-degenerate motility function,  which accounts for a repulsion-dominated chemotaxis. Global boundedness of classical solutions is proved for an initial Neumann boundary value problem of the system in any space dimension. In addition, stabilization towards the homogeneous steady state is shown, provided that the motility is monotone non-decreasing. The key steps of the proof consist of the introduction of several auxiliary functions and a refined comparison argument, along with uniform-in-time estimates for a functional involving nonlinear coupling between the unknowns and auxiliary functions.
\end{abstract}
	
	\maketitle
	%
	%
\pagestyle{myheadings}
\markboth{\sc{J. Jiang \& Ph. Lauren\c{c}ot}}{\sc{Boundedness   in a Keller--Segel Model with Asymptotically Non-degenerate Motility}}
	
\section{Introduction}
	
Let $\Omega$ be a smooth bounded domain of $\mathbb{R}^N$, $N\ge 1$, and consider the following initial-boundary value problem
\begin{subequations}\label{ks}
	\begin{align}
		& \partial_t u = \Delta\big( u \gamma(v)\big), \qquad &(t,x) &\in  (0,T_{\text{max}}) \times\Omega, \label{ks1}\\			& \tau \partial_t v = \Delta v -v + u ,  \qquad &(t,x) &\in (0,T_{\text{max}}) \times\Omega, \label{ks2}\\			& \nabla\big( u\gamma(v)\big)\cdot \mathbf{n} = \nabla v\cdot \mathbf{n} = 0, \qquad &(t,x) &\in (0,T_{\text{max}}) \times\partial\Omega, \label{ks3}\\			& (u,v)(0) = (u^{in},v^{in}), \qquad &x &\in\Omega, \label{ks4}
	\end{align}
\end{subequations} 
where $\mathbf{n}$ denotes the outward unit normal vector field to $\partial\Omega$.
	 
System~\eqref{ks} was originally proposed by Keller and Segel in their seminal work \cite{1971KS} to model the chemotaxis phenomenon induced by a local sensing mechanism. In some  recent bio-physics work \cite{PhysRevLett.108.198102},  its non-homogeneous version with an additional logistic source in the first equation is applied to model formation of stripe patterns. In~\eqref{ks}, $u$ and $v$ denote the density of cells and the concentration of chemical signal, respectively. The motility $\gamma(v)$ explicitly depends on $v$ with a motility function $\gamma$, and its monotonicity accounts for the influence of chemical stimuli on cellular movement. Indeed, through  a direct expansion  $\Delta(u\gamma(v))=\mathrm{div}(\gamma(v)\nabla u+u\gamma'(v)\nabla v)$, it is easily seen that cells are attracted when $\gamma'<0$, while they are repelled when $\gamma'>0$.
	
Recent progress on the theoretical analysis on~\eqref{ks} has uncovered certain interesting relation between its dynamics and the properties of $\gamma$. Roughly speaking, in the monotone non-increasing case, a classical solution always exists globally in any space dimension $N\geq1$, whatever the value of $\tau\ge 0$ \cite{JiLa2021,FuSe2022a}, while its boundedness is closely related to $N$ and the decay rate of $\gamma$ at infinity \cite{JiLa2021,JLZ2022,FuSe2022b,FuJi2021b}. For the specific choice of motility function $\gamma(s)=e^{-s}$, unbounded solutions are constructed in any space dimension $N\geq2$ \cite{FuJi2020,FuJi2021a,JW2020,FuSe2022a}, which well captures a concentration trend of the population due to the chemoattraction effect.

 On the contrary, a monotone non-decreasing motility seems to stabilize the dynamics. In \cite{JL2024}, we analyze the dynamics of~\eqref{ks} with $\tau=0$ and $\gamma$ being asymptotically unbounded, corresponding to a chemorepulsion dominated situation. It is proved in that case that classical solutions exist globally and are  always uniformly-in-time bounded. Moreover, if $\gamma$ is monotone non-decreasing, then explicit upper bounds for both $u$ and $v$ are derived, and their stabilization towards a homogeneous steady state is shown as well. In contrast, when $\tau>0$, neither global existence, nor boundedness of classical solutions, have been thoroughly investigated and the situation is yet unclear. As far as we know, the only contribution in that direction deals with bounded positive motility functions, for which the existence and uniqueness of a uniformly-in-time bounded global classical solution are established in arbitrary space dimension in \cite{XiaoJiang2022}.
 
 In the present contribution, it is our aim to verify the same boundedness-enforcing effect of an asymptotically non-degenerate motility function in \eqref{ks} when $\tau>0$, which in particular allows for unbounded motility function. The current work, together with our previous one \cite{JL2024}, then implies that, when chemorepulsion dominates in the Keller--Segel model with local sensing, classical solutions always exist globally and stay bounded.
	
Throughout this paper, we assume that
\begin{equation}
	\gamma\in C^3((0,\infty)), \qquad \gamma>0 \;\;\text{ on }\; (0,\infty), \label{hypgam1}
\end{equation}
and moreover, that $\gamma$ is asymptotically non-degenerate, or in other words,  it is asymptotically bounded from below in the sense that
\begin{equation}\label{ginf}
	\gamma_\infty:=\liminf\limits_{s\rightarrow\infty}\gamma(s)>1/\tau.
\end{equation}
For the initial conditions, we assume that
\begin{equation}\label{ini}
	\begin{split}
		& \left( u^{in}, v^{in} \right)\in W^{1,N+1}(\Omega;\mathbb{R}^2)\,, \quad u^{in}\not\equiv 0\,, \\
		& u^{in}\ge 0\,, \quad  v^{in}> 0\quad  \mbox{in } \bar\Omega\,.
	\end{split}
\end{equation} 	

Our first main result states that the initial-boundary value problem \eqref{ks} has a unique global bounded classical solution provided $\gamma$ is asymptotically bounded from below by $1/\tau$.

\begin{theorem}\label{TH1}
Assume that $\gamma$ satisfies~\eqref{hypgam1} and~\eqref{ginf}. For any given  initial conditions $(u^{in},v^{in})$ satisfying~\eqref{ini}, the initial-boundary value problem~\eqref{ks} has  a unique global non-negative classical solution $(u,v)\in C([0,\infty)\times \bar{\Omega};\mathbb{R}^2)\cap C^{1,2}((0,\infty)\times \bar{\Omega};\mathbb{R}^2)$, which satisfies the conservation of mass
\begin{equation}
	\|u(t)\|_1 = m|\Omega| \triangleq \|u^{in}\|_1, \qquad t\ge 0, \label{cm}
\end{equation}
and is uniformly-in-time bounded in the sense that
\begin{equation*}
	\sup\limits_{t\geq0}\{\|u(t)\|_\infty+\|v(t)\|_\infty\}<\infty. 
\end{equation*}
\end{theorem}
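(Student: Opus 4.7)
The plan starts from standard quasilinear parabolic theory (Amann's framework), which yields a unique maximal classical solution $(u,v)$ on $[0,T_{\text{max}})\times\bar\Omega$ together with an extension criterion
$$T_{\text{max}}<\infty\ \Longrightarrow\ \limsup_{t\nearrow T_{\text{max}}}\bigl(\|u(t)\|_\infty+\|v(t)\|_\infty\bigr)=\infty.$$
Non-negativity of $u$ and strict positivity of $v$ follow from the strong maximum principle applied to~\eqref{ks1} and~\eqref{ks2} respectively, so $\gamma(v)$ remains locally bounded away from zero and infinity along the flow. Mass conservation~\eqref{cm} is immediate from integrating~\eqref{ks1}, while integrating~\eqref{ks2} gives $\|v(t)\|_1\le\max\{\|v^{in}\|_1,m|\Omega|\}$ on $[0,T_{\text{max}})$. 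Using~\eqref{ginf}, I next fix $\eta\in(1/\tau,\gamma_\infty)$ and $v_\star>0$ with $\gamma(s)\ge\eta$ for $s\ge v_\star$, so that on $\{v\ge v_\star\}$ the quantitative gap $\tau\gamma(v)-1\ge\tau\eta-1>0$ is available, whereas on $\{v<v_\star\}$ the functions $\gamma,\gamma',\gamma''$ are uniformly bounded by~\eqref{hypgam1}. The blow-up question then reduces to producing uniform-in-time bounds on $u$ and $v$.

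\textbf{Auxiliary functions and nonlinear functional.} The heart of the argument is to couple $u$ with its diffusive flux potential $u\gamma(v)$ through one or several auxiliary functions. A natural candidate is the function $\phi$ solving the linear problem
$$\tau\partial_t\phi=\Delta\phi-\phi+u\gamma(v),\qquad \partial_{\mathbf n}\phi|_{\partial\Omega}=0,\qquad \phi(0)\ge v^{in},$$
so that $\phi-v$ is governed by the same parabolic operator with source $u(\gamma(v)-1)$ and inherits comparison and Duhamel bounds; $\phi$ thus serves as a parabolic ``antiderivative'' of $u\gamma(v)$ that can be paired against test functions in the $u$-equation. Building on $\phi$, I would construct a non-negative functional
$$\mathcal{F}(t)=\int_\Omega H\bigl(u(t,x),v(t,x),\phi(t,x)\bigr)\,\rd x,$$
in which $H$ couples a power $u^p$ with a tuned nonlinear function of $v$ and $\phi$, and derive a Gronwall-type inequality
$$\frac{\rd}{\rd t}\mathcal{F}(t)+c\,\mathcal{D}(t)\le C_1+C_2\,\mathcal{F}(t),$$
where the dissipation $\mathcal{D}$ controls $\|\nabla u^{p/2}\|_2^2$ or $\|\nabla(u\gamma(v))\|_2^2$. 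Hypothesis~\eqref{ginf} is consumed precisely here: after integration by parts using the identity $\gamma'(v)\Delta v=\tau\partial_t\gamma(v)+\gamma'(v)(v-u)$ extracted from~\eqref{ks2}, a coercive contribution of the form $(\tau\gamma(v)-1)|\nabla u|^2$ emerges on $\{v\ge v_\star\}$ with the explicit lower bound $\tau\eta-1$, which is exactly what is required to absorb the indefinite cross-terms produced by $\gamma'(v)\nabla u\cdot\nabla v$ and $\gamma''(v)|\nabla v|^2$.

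\textbf{Bootstrap and main obstacle.} Once $\mathcal{F}$ is bounded uniformly on $[0,T_{\text{max}})$, an $L^p$ bound on $u$ with $p>N/2$ drops out; maximal $L^p$-regularity applied to~\eqref{ks2} then upgrades $v$ to $L^\infty((0,T_{\text{max}})\times\Omega)$, and a Moser iteration on~\eqref{ks1} with the now-bounded coefficient $\gamma(v)$ promotes this to a uniform bound on $\|u(t)\|_\infty$, contradicting the extension criterion and giving $T_{\text{max}}=\infty$ with the claimed uniform boundedness. The main obstacle is unquestionably the construction of $\mathcal{F}$ and of the auxiliary $\phi$ in the second step: since $\gamma$ is not assumed monotone, the chemotactic drift $u\gamma'(v)\nabla v$ in~\eqref{ks1} carries no favorable sign, and the cross-terms it generates must be absorbed using only the quantitative asymptotic margin $\tau\gamma_\infty>1$. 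Identifying an integrand $H$ and an auxiliary $\phi$ for which this absorption works in every space dimension, without invoking any monotonicity of $\gamma$, is the principal technical difficulty and the main step beyond the $\tau=0$ analysis of~\cite{JL2024}.
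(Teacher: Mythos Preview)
Your auxiliary function $\phi$ is exactly the paper's $\Psi$, and the local theory plus the final bootstrap are standard and correct. The gap is in the middle step: the claimed coercive term $(\tau\gamma(v)-1)|\nabla u|^2$ does not emerge from the computation you outline. Testing \eqref{ks1} against $u^{p-1}$ (or any weight built from $v,\phi$) gives a dissipation with coefficient $\gamma(v)$, not $\tau\gamma(v)-1$; the identity $\gamma'(v)\Delta v=\tau\partial_t\gamma(v)+\gamma'(v)(v-u)$ converts $\Delta v$ terms into time derivatives and zero-order terms, but produces nothing that subtracts $|\nabla u|^2$ from the diffusion. Since $\gamma'$ has no sign and $\gamma,\gamma'$ can be unbounded along $\{v\to\infty\}$, the cross-term $u\gamma'(v)\nabla v\cdot\nabla u$ cannot be absorbed by $\gamma(v)|\nabla u|^2$ alone without an a priori bound on $v$, which is precisely what you are trying to prove. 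As written, the functional $\mathcal{F}$ is a placeholder, and no choice of $H(u,v,\phi)$ with a power $u^p$ is indicated that would close in arbitrary dimension.

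The paper proceeds quite differently and never attempts a direct $L^p$ estimate on $u$. It introduces, in addition to $\Psi$, the elliptic auxiliary $w=\mathcal{A}^{-1}[u]$ and $\psi=\mathcal{A}^{-1}[\Psi]$, and derives the exact identity $w+\tau\Psi=v+\tau\psi+\eta$ with $\eta$ a decaying heat flow. The assumption $\tau\gamma_\infty>1$ is consumed in a \emph{comparison} step: one builds a monotone minorant $\gamma_l\le\gamma$ with $\gamma_l\to\alpha>1/\tau$ at infinity, applies the parabolic comparison principle to $\mathcal{L}[\Gamma_l(v)]\le u\gamma(v)=\mathcal{L}[\Psi]$, and obtains $v\le c\,\Psi+C$ pointwise. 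Combined with the identity this gives $\Psi\le C\psi+C=C\mathcal{A}^{-1}[\Psi]+C$, which iterates through elliptic regularity to $\|\Psi\|_\infty\le C\|\Psi\|_1+C$, hence $\|v\|_\infty\le C\|\Psi\|_1+C$. Only then does an energy argument enter, and the functional is not built from $u^p$ but from
\[
\mathcal{F}=\|\nabla\mathcal{K}[u-m]\|_2^2+\tfrac12\int_\Omega u(\tau\psi-w+K_0)\,\rd x+\tau\|\Psi\|_1,
\]
whose time derivative, computed via the identity above, closes to give $\sup_t\|\Psi\|_1<\infty$. The role of \eqref{ginf} is thus comparison-theoretic, not coercivity of a gradient term; your proposal misidentifies where the hypothesis is spent and leaves the central estimate unproved.
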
	

In the particular case where the motility function $\gamma$ is non-decreasing, which corresponds to a fully chemorepulsive behavior, we may relax the asymptotic lower bound~\eqref{ginf} and prove that bounded classical solutions exist globally and converge to the homogeneous steady state as time increases to infinity.

\begin{theorem}\label{TH2}
Assume that \eqref{hypgam1} and that $\gamma'\geq 0$. For any given  initial conditions $(u^{in},v^{in})$ satisfies~\eqref{ini} with $m=\|u^{in}\|_1/|\Omega|$, the initial-boundary value problem~\eqref{ks} has  a unique global non-negative classical solution $(u,v)\in C([0,\infty)\times \bar{\Omega};\mathbb{R}^2)\cap C^{1,2}((0,\infty)\times \bar{\Omega};\mathbb{R}^2)$, which is uniformly-in-time bounded and moreover, 
\begin{equation*}
	\lim\limits_{ t\rightarrow \infty}\left(\|u(t)-m\|_\infty+\|v(t)-m\|_{W^{1,\infty}}\right)=0.
\end{equation*}		
\end{theorem}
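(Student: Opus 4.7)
The strategy is to first secure global existence and uniform-in-time boundedness, and then obtain convergence to the homogeneous steady state through a Lyapunov-type argument tailored to the monotone setting.

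For the first step, since $\gamma$ is positive and non-decreasing on $(0,\infty)$, either $\gamma$ is unbounded---in which case $\gamma_\infty = \infty > 1/\tau$ and Theorem~\ref{TH1} directly yields the unique global bounded classical solution---or $\gamma$ is bounded, in which case the boundedness result of~\cite{XiaoJiang2022} for positive bounded motility functions applies. In either situation, the uniform $L^\infty$-bound on $(u,v)$, combined with a pointwise positive lower bound on $v$ (obtainable as in the proof of Theorem~\ref{TH1}), feeds into standard Schauder bootstrapping to produce uniform $C^{2+\alpha,1+\alpha/2}$-regularity of $(u,v)$ on $\bar\Omega\times[1,\infty)$, rendering trajectories precompact in $C^2(\bar\Omega;\mathbb{R}^2)$.

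For the convergence, I would introduce a Lyapunov functional of the form
\begin{equation*}
\mathcal{E}(t) := \int_\Omega \Bigl[u\log\frac{u}{m} - u + m\Bigr]\,dx + \int_\Omega G(v)\,dx,
\end{equation*}
where $G$ is a smooth strictly convex function with $G'(m)=0$, to be chosen. A direct calculation combining both equations of~\eqref{ks} yields
\begin{equation*}
\frac{d\mathcal{E}}{dt} = -\int_\Omega \Bigl(\gamma(v)\frac{|\nabla u|^2}{u} + \gamma'(v)\nabla u\cdot\nabla v + \frac{G''(v)}{\tau}|\nabla v|^2\Bigr)\,dx - \frac{1}{\tau}\int_\Omega G'(v)(v-u)\,dx.
\end{equation*}
Exploiting $\gamma'\geq 0$ and the a priori bound $\|u\|_\infty \leq M$ obtained in Step~1, I would choose $G$ so that $G''(v) \geq \tau M(\gamma'(v))^2/(4\gamma(v)) + \epsilon$ on the bounded range of $v$; the quadratic form in the first integral is then pointwise non-negative with margin $\epsilon$, and it dominates $\int|\nabla u|^2/u$ and $\int|\nabla v|^2$ from below. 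The residual coupling term decomposes, via $\int(u-m)\,dx=0$ together with $G'(m)=0$, into a genuinely negative contribution $-c\|v-m\|_2^2$ plus a cross term absorbed into the dissipation using Poincaré--Wirtinger and Young's inequalities.

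Integrating this differential inequality in time produces $\int_0^\infty \bigl(\|\nabla u(t)\|_2^2 + \|\nabla v(t)\|_2^2 + \|v(t)-m\|_2^2\bigr)\,dt < \infty$. The uniform $C^{2+\alpha}$-regularity guarantees that the integrand is uniformly continuous on $[1,\infty)$, hence it vanishes as $t\to\infty$. Combined with mass conservation, Poincaré's inequality, and interpolation against the $C^{2+\alpha}$-bounds, this $L^2$-decay upgrades to $\|u(t)-m\|_\infty + \|v(t)-m\|_{W^{1,\infty}}\to 0$. The principal obstacle I expect is the simultaneous tuning of $G$ and absorption of the coupling term: $G$ must be built from the a priori $L^\infty$-bound on $u$, and the discriminant of the quadratic form must retain a positivity margin that survives the Poincaré--Young bookkeeping used for the cross term---this is exactly where the monotonicity $\gamma'\geq 0$ and the strict positivity of $\gamma$ along the orbit jointly enter.
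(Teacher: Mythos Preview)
Your treatment of global existence and uniform boundedness is correct and coincides with the paper's: split according to whether $\gamma$ is bounded (then \cite{XiaoJiang2022} applies) or unbounded (then $\gamma_\infty=\infty>1/\tau$ and Theorem~\ref{TH1} applies). The subsequent bootstrap to uniform H\"older regularity and the passage from time-integrability of the dissipation to $L^\infty$-convergence are also standard and fine.

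The gap is in the Lyapunov construction for convergence. Your entropy functional $\int u\log(u/m)+\int G(v)$ forces you to absorb the cross term $\tfrac{1}{\tau}\int G'(v)(u-m)\,\rd x$, and the ``tuning'' you flag as the principal obstacle in fact fails in general. With $G''(v)\ge \tau M(\gamma'(v))^2/(4\gamma(v))+\epsilon$, the gradient form is positive definite, but the extractable coefficient in front of $\|\nabla u\|_2^2$ is of order $\epsilon/\|G''\|_\infty$, while the cross term has size $\|G''\|_\infty\|v-m\|_2\|u-m\|_2$. Absorbing via Young and Poincar\'e--Wirtinger then requires $\|G''\|_\infty^3\lesssim \epsilon^2$; since $\|G''\|_\infty\ge \tau M(\gamma'_{\max})^2/(4\gamma_*)>0$ whenever $\gamma'\not\equiv 0$ on the (bounded) range of $v$, this is impossible for small $\epsilon$, and for large $\epsilon$ one has $\|G''\|_\infty\ge\epsilon$, so it fails again. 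Note also that $\gamma'\ge 0$ does not help with the sign of $\gamma'(v)\nabla u\cdot\nabla v$, which is indefinite regardless. The paper sidesteps this entirely by replacing the Boltzmann entropy with the $H^{-1}$ energy $\tfrac12\|\nabla\mathcal{K}[u-m]\|_2^2$ (Lemma~\ref{lem.lf}): testing~\eqref{ks1} with $\mathcal{K}[u-m]$ produces the cross term $-m\int\gamma(v)(u-m)\,\rd x$, which \emph{exactly cancels} the one coming from the $v$-part $m\tau\int(\Gamma(v)-\gamma(m)v)\,\rd x$, yielding the clean identity $\frac{\rd}{\rd t}\mathcal{I}+\mathcal{D}=0$ with $\mathcal{D}\ge 0$. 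This gives $\int_0^\infty\|u-m\|_2^2\,\rd t<\infty$ directly, after which your concluding argument (uniform continuity, equation for $v$, interpolation) goes through; see \cite[Section~7.2]{JLZ2022}.
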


\begin{remark}
	Let us emphasize that Theorem~\ref{TH2} provides the global existence of classical solutions to the chemorepulsion model with local sensing in arbitrary space dimension, a feature with contrasts markedly with the current knowledge on the classical Keller-Segel chemorepulsion model, for which such a result is only available in space dimensions $N\in \{1,2\}$ \cite{CLMR}. 
\end{remark}

\bigskip
In view of the signal-dependent feature of $\gamma$,  a positive lower bound, along with an upper one, for the signal concentration plays a crucial role in the analysis. While the former can be obtained by a well-established result due to \cite[Lemma~2.6]{Fuji2016} and the conservation of mass~\eqref{cm}, the main obstacle lies in deriving the latter upper bound estimate for $v$. In a series of recent research \cite{FuJi2020,FuJi2021a,XiaoJiang2022}, a comparison argument is developed to achieve this goal. However, the essential assumption $\limsup\limits_{s\rightarrow\infty}\gamma(s)<1/\tau$ is needed therein, which obviously excludes unbounded motility functions.  In our recent work \cite{JL2024}, the difficulty stemming from the  unboundedness of motility functions is overcome by the development of new comparison techniques, taking advantage of the delicate duality structure of the system~\eqref{ks} when $\tau=0$, but the technique developed in \cite{JL2024} does not seem to extend to positive values of $\tau$. 

In the present work, we achieve a fundamental refinement of the key idea in \cite{FuJi2021a}, allowing us to cope with  unbounded motility function in~\eqref{ks}. Specifically, besides the introduction of the auxiliary function $w$ satisfying an elliptic problem as done in \cite{FuJi2021a}, we construct a new auxiliary function $\Psi$, which is the solution of a heat equation with the external source term $u\gamma(v)$. Making use of the nice duality structure, we are able to establish an important identity, unveiling the intrinsic relation between the original unknowns $(u,v)$ and the auxiliary functions $(w,\Psi)$, see Lemma~\ref{lmkid2}. A suitable application of parabolic comparison principles then shows a pointwise upper control of $v$ by $\Psi$, see Lemma~\ref{lem3a} and Lemma~\ref{lem3}. Next, an iteration argument based on elliptic regularity, along with the above mentioned identity, surprisingly gives rise to an upper control of $v$ by an $L^1$-estimate on $\Psi$, see Lemma~\ref{cor1}. The final step is the derivation of a uniform-in-time $L^1$-estimate for $\Psi$ by constructing energy estimates for a functional involving a nonlinear coupling term between the unknowns and auxiliary functions, using again the above mentioned identity.  

\medskip
The paper is organized as follows. In section~\ref{sec.2}, the local well-posedness of~\eqref{ks} is established in the framework of classical solutions, along with some useful lemmas. In section~\ref{sec.3}, we introduce the auxiliary functions and establish two key identities that play important roles in the proofs. In section~\ref{sec.4}, we derive a pointwise control on $v$ by $L^1$-bounds for the new auxiliary function via comparison and iteration. We first consider the simpler case $\gamma'\geq0$, and then generalize the result to non-monotone motility functions. In  section~\ref{sec.5}, we establish uniform-in-time $L^1$-estimates of the auxiliary function  by energy methods. We prove Theorem~\ref{TH1} and Theorem~\ref{TH2} in the last section.

\section{Preliminaries}\label{sec.2}

In this section, we recall some useful results. We begin with the local well-posedness in a suitable functional setting, which mainly follows from the theory developed by Amann in \cite{Aman1988, Aman1989, Aman1990, Aman1993} and the comparison principle, along with positivity properties of the heat equation. A proof can be found in \cite[Proposition~2.1]{JLZ2022}.

\begin{proposition}\label{local}
	Suppose that $\gamma$ and $(u^{in},v^{in})$ satisfy~\eqref{hypgam1} and~\eqref{ini}, respectively. Then there exists $T_{\mathrm{max}} \in (0, \infty]$ such that problem~\eqref{ks} has a unique non-negative classical solution 
    \begin{equation*}
        (u,v)\in C([0,T_{\mathrm{max}})\times \bar{\Omega};\mathbb{R}^2)\cap C^{1,2}((0,T_{\mathrm{max}})\times \bar{\Omega};\mathbb{R}^2),
    \end{equation*}
    which satisfies the mass conservation
	\begin{equation}
		\int_\Omega u(t,x)\ \mathrm{d} x = m |\Omega| = \int_\Omega u^{in}(x)\ \mathrm{d} x >0
		\quad \text{for\ all}\ t \in (0,T_{\mathrm{max}})\,.\label{e0}
	\end{equation}	
	Moreover,  there holds
	\begin{equation}
		\|v(t)\|_{1}=\|v^{in}\|_1e^{-t/\tau}+\|u^{in}\|_1 (1-e^{-t/\tau})\leq\max\{\|u^{in}\|_1,\|v^{in}\|_1\}\quad \text{for\ all}\ t \in (0,T_{\mathrm{max}}),\label{nup}
	\end{equation}
	and there is $v_*>0$ depending only on $\Omega$, $v^{in}$, and $\|u^{in}\|_1$ such that
	\begin{equation}
		v(t,x) \ge v_*\,, \qquad (t,x)\in [0,T_{\mathrm{max}})\times \bar{\Omega}\,. \label{e00}
	\end{equation}
	Finally, if $T_{\mathrm{max}}<\infty$, then
	\begin{equation*}
		\limsup\limits_{t\nearrow T_{\mathrm{max}}}\|u(t)\|_{\infty}=\infty.
	\end{equation*}
\end{proposition}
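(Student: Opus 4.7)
My plan is the classical one for quasilinear chemotaxis systems, combining Amann's abstract framework with standard parabolic tools; the only delicate ingredient is the uniform-in-time positive lower bound for $v$. For local existence, I would recast the problem as a quasilinear parabolic system for $U=(u,v)^T$ of the form $\partial_t U = \nabla\cdot\bigl(A(U)\nabla U\bigr) + F(U)$, with upper-triangular diffusion matrix
\begin{equation*}
A(u,v) = \begin{pmatrix} \gamma(v) & u\gamma'(v) \\ 0 & 1/\tau \end{pmatrix}.
\end{equation*}
By \eqref{hypgam1} the eigenvalues $\gamma(v)$ and $1/\tau$ are strictly positive whenever $v>0$, so the system is normally parabolic in Amann's sense. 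Since $W^{1,N+1}(\Omega;\mathbb{R}^2)\hookrightarrow W^{1,p}$ for some $p>N$, the initial data fall into the admissible class, and the quasilinear theory of \cite{Aman1988,Aman1989,Aman1990,Aman1993} under homogeneous Neumann boundary conditions yields a unique maximal strong solution on $[0,T_{\max})$, whose classical $C^{1,2}$ regularity on $(0,T_{\max})\times\bar\Omega$ follows by standard parabolic bootstrapping.

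Then I would establish the sign properties and the $L^1$ identities. Since $\partial_{\mathbf{n}} v=0$ on the boundary, the no-flux condition in \eqref{ks3} reduces to $\gamma(v)\partial_{\mathbf{n}} u=0$, so once $v$ is known and stays in a compact subset of $(0,\infty)$ on every closed subinterval of $[0,T_{\max})$, the first equation becomes a linear parabolic equation in $u$ with smooth coefficients and a homogeneous Neumann datum; the parabolic minimum principle together with $u^{in}\ge 0$ then yields $u\ge 0$. For pointwise positivity of $v$ I would use the Duhamel representation against the Neumann heat semigroup $S(\cdot)$,
\begin{equation*}
v(t) = e^{-t/\tau}S(t/\tau)v^{in} + \frac{1}{\tau}\int_0^t e^{-(t-s)/\tau}S((t-s)/\tau)u(s)\,\mathrm{d}s,
\end{equation*}
combined with $u\ge 0$, the order-preservation of $S$, and $\inf_{\bar\Omega} v^{in}>0$. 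Integrating \eqref{ks1} over $\Omega$ and using \eqref{ks3} gives the mass conservation \eqref{e0}; integrating \eqref{ks2} and inserting \eqref{e0} produces the linear ODE $\tau\tfrac{\mathrm{d}}{\mathrm{d}t}\|v\|_1 + \|v\|_1 = m|\Omega|$, whose explicit integration is exactly \eqref{nup}.

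The main obstacle is the uniform positive lower bound \eqref{e00}, because the homogeneous contribution $e^{-t/\tau}S(t/\tau)v^{in}$ decays to zero as $t\to\infty$ and the sought uniform-in-time lower bound must therefore be extracted from the forcing term $u$, whose $L^1$-norm is conserved by \eqref{e0}. This is precisely the setting covered by \cite[Lemma~2.6]{Fuji2016}, which I would invoke directly to produce a positive constant $v_*$ depending only on $\Omega$, $\inf_{\bar\Omega}v^{in}$, and $\|u^{in}\|_1$. Finally, the blow-up alternative is Amann's continuation principle: if $T_{\max}<\infty$ then the $W^{1,p}$-norm of $(u,v)$ must diverge. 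Once $v$ is known to be bounded away from zero the $u$-equation is uniformly parabolic, and a standard bootstrap — $L^\infty$-bounds on $u$ yield $W^{1,q}$-bounds on $v$ via \eqref{ks2} for arbitrary $q<\infty$, then $C^\alpha$-regularity, and then $W^{1,p}$-bounds on $u$ — rules out $W^{1,p}$-blow-up of $(u,v)$ as long as $\|u(t)\|_\infty$ stays finite, giving the stated alternative.
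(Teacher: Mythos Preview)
Your proposal is correct and follows essentially the same approach as the paper, which does not give a self-contained proof but refers to \cite[Proposition~2.1]{JLZ2022} for the Amann framework and explicitly points to \cite[Lemma~2.6]{Fuji2016} together with the mass conservation~\eqref{e0} for the uniform positive lower bound~\eqref{e00}. Your sketch fills in precisely these ingredients.
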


We next show that an $L^\infty$-estimate for $v$ on  $(0,T)$ for some $T>0$ guarantees that $T_{\mathrm{max}}\ge T$. 

\begin{proposition}\label{prop2.2}
	Under the assumption of Proposition~\ref{local}, if there is $T>0$ such that
	\begin{equation}
		\mathcal{V}(T) \triangleq \sup_{[0,T]\cap [0,T_{\mathrm{max}})}\big\{ \|v(t)\|_\infty \big\} <\infty\,, \label{vvv}
	\end{equation} 
	then $T_{\mathrm{max}}\ge T$ and 
	\begin{equation*}
		\mathcal{U}(T) \triangleq \sup_{[0,T]}\big\{ \|u(t)\|_\infty \big\} < \infty\,. 
	\end{equation*}
	In addition, if~\eqref{vvv} holds true for all $T>0$ and there is $\mathcal{V}_\infty>0$ such that $\mathcal{V}(T)\le\mathcal{V}_\infty$ for all $T>0$, then there is $\mathcal{U}_\infty>0$ such that  $\mathcal{U}(T)\le \mathcal{U}_\infty$ for all $T>0$.
\end{proposition}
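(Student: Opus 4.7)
The plan is to show that the assumed $L^\infty$-bound~\eqref{vvv} on $v$ propagates to an $L^\infty$-bound on $u$, so that the continuation criterion from Proposition~\ref{local} guarantees $T_{\max}\ge T$. The starting point is the observation that the pointwise lower bound $v(t,x)\ge v_*>0$ from~\eqref{e00}, combined with~\eqref{vvv}, confines the values of $v$ to the compact subinterval $[v_*,\mathcal{V}(T)]\subset (0,\infty)$; thanks to $\gamma\in C^3((0,\infty))$, this immediately produces uniform two-sided bounds on $\gamma(v)$, $\gamma'(v)$, and $\gamma''(v)$ on $[0,T]\times\bar\Omega$, depending only on $v_*$ and $\mathcal{V}(T)$. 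In particular, writing~\eqref{ks1} as $\partial_t u = \mathrm{div}(\gamma(v)\nabla u + u\gamma'(v)\nabla v)$, we see that the equation for $u$ is uniformly parabolic with a strictly positive and bounded diffusion coefficient.

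The first step of the argument is to upgrade the regularity of $v$. Using the variation-of-constants formula for~\eqref{ks2} together with $L^p$--$L^q$ smoothing estimates for the Neumann heat semigroup on $\Omega$, and invoking mass conservation~\eqref{e0} to handle the source $u$, we obtain a bound on $\sup_{[0,T]}\|\nabla v(t)\|_{L^q(\Omega)}$ for an exponent $q>1$ depending on $N$. Crucially, the exponential factor $e^{-(t-s)/\tau}$ in the Duhamel integral makes this bound independent of $T$. The second step is an Alikakos--Moser energy argument. Testing~\eqref{ks1} with $p u^{p-1}$ and using Young's inequality to absorb the cross term yields, for $p\ge 2$,
\[
\frac{d}{dt}\|u\|_p^p + c_1 \int_\Omega u^{p-2}|\nabla u|^2\,dx \le c_2 \int_\Omega u^p |\nabla v|^2\,dx,
\]
with constants $c_1,c_2$ depending only on $p$ and on the range of $v$. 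Combining a Gagliardo--Nirenberg interpolation with the gradient bound from the first step closes this inequality for any fixed $p<\infty$; iterating along a sequence of exponents, one eventually reaches some $p$ large enough so that a standard Moser iteration (or a Sobolev-embedding bootstrap) delivers the desired $L^\infty$-bound on $u$ on $[0,T]$.

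The main difficulty lies in keeping all constants uniform in $T$, as required by the second assertion of the proposition. This is achieved through two mechanisms: the exponential decay of the Neumann heat semigroup around the mean in the Duhamel integral for $v$, which provides a $T$-independent bound on $\nabla v$, and the absorbing term $c_1 \int u^{p-2}|\nabla u|^2\,dx$ in the energy inequality for $u^p$, which together with another application of Gagliardo--Nirenberg and the conservation of mass~\eqref{e0} on the lower-order part produces an autonomous differential inequality of the form $\frac{d}{dt}y \le C_1 - C_2 y^\alpha$ for some $\alpha>1$, yielding a time-independent bound. Consequently, if $\mathcal{V}(T)\le\mathcal{V}_\infty$ for all $T>0$, the resulting bound on $\mathcal{U}(T)$ is itself independent of $T$, giving the desired $\mathcal{U}_\infty$.
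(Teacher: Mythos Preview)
Your proposal is correct and outlines precisely the standard bootstrap argument (heat-semigroup smoothing for $\nabla v$ followed by an Alikakos--Moser iteration on $u$, with the damping terms ensuring $T$-independent bounds) that the paper defers to the references \cite{JLZ2022,FuSe2022a,FuSe2022b,XiaoJiang2022} rather than writing out itself. In other words, the paper's ``proof'' is merely a citation, and your sketch faithfully reproduces the content of those cited arguments.
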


\begin{proof}
	The proof relies on a well-established argument already described in \cite{JLZ2022}, see also \cite{FuSe2022a,FuSe2022b}, and we refer the interested reader to \cite[Section~1]{XiaoJiang2022} for an outline of the key steps of the proof.  As mentioned in \cite{XiaoJiang2022}, no monotonicity property of $\gamma$ is needed, provided there are positive upper and lower bounds on $\gamma(v)$.
\end{proof}

Next, we let $(\cdot)_+=\max\{\cdot,0\}$ and recall the following result,  see \cite[(9.2)~Proposition]{Aman1983}, \cite[Lemme~3.17]{BeBo1999}, or \cite[Lemma~2.2]{AhnYoon2019}.

\begin{lemma}\label{lm2}
	Let $f\in  L^1(\Omega)$. For any $1\leq q< \frac{N}{(N-2)_+}$, there exists a positive constant $C(q)$ depending only on $\Omega$ and $q$ such that the solution $z \in W^{1,1}(\Omega)$ to
	\begin{equation*}
		-\Delta z+ z=f,\qquad x\in\Omega\,,\qquad\qquad
		\nabla z\cdot \mathbf{n}=0\,,\qquad x\in\partial\Omega\,,
	\end{equation*}
	satisfies $\|z\|_q \leq C(q) \|f\|_1$.
\end{lemma}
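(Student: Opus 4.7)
The plan is a duality argument, reducing the $L^q$ estimate for $z$ with $L^1$ data to an $L^\infty$ estimate for the adjoint Neumann problem with $L^{q'}$ data, where $q'=q/(q-1)$ is the conjugate exponent. By the $L^q$--$L^{q'}$ duality, it suffices to bound $\int_\Omega z\phi\,\rd x$ uniformly for $\phi$ ranging over the unit ball of $L^{q'}(\Omega)$.

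First, given such $\phi$, I would let $\psi$ be the unique solution to the adjoint Neumann problem
\begin{equation*}
-\Delta \psi + \psi = \phi \text{ in } \Omega,\qquad \nabla \psi \cdot \mathbf{n} = 0 \text{ on } \partial\Omega.
\end{equation*}
The restriction $1\le q < N/(N-2)_+$ translates into $q' > N/2$, with the convention that the constraint is void when $N\in\{1,2\}$. In any case $2q'>N$, so standard $W^{2,p}$ regularity for the Neumann Laplacian on a smooth bounded domain yields $\|\psi\|_{W^{2,q'}}\le C\|\phi\|_{q'}$, and the Sobolev embedding $W^{2,q'}(\Omega)\hookrightarrow C(\bar\Omega)$ produces the crucial pointwise bound $\|\psi\|_\infty \le C(q)\|\phi\|_{q'}$.

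Second, I would use $\psi$ as a test function. Formally, integrating by parts twice and exploiting both Neumann boundary conditions,
\begin{equation*}
\int_\Omega z\phi\,\rd x = \int_\Omega z(-\Delta\psi+\psi)\,\rd x = \int_\Omega(-\Delta z + z)\psi\,\rd x = \int_\Omega f\psi\,\rd x,
\end{equation*}
whence $\left|\int_\Omega z\phi\,\rd x\right| \le \|f\|_1\|\psi\|_\infty \le C(q)\|f\|_1\|\phi\|_{q'}$. Taking the supremum over admissible $\phi$ delivers the claim.

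The principal technical obstacle is that $z$ only belongs to $W^{1,1}(\Omega)$, which is insufficient regularity to justify the two integrations by parts directly. I would circumvent this by approximation: pick $f_n\in C^\infty(\bar\Omega)$ with $f_n\to f$ in $L^1(\Omega)$, let $z_n$ denote the classical Neumann solution with datum $f_n$, so that $z_n\in W^{2,p}(\Omega)$ for every finite $p$ and the duality identity above is genuinely valid for $z_n$. This gives $\|z_n\|_q \le C(q)\|f_n\|_1$ for every $n$. I would then invoke the existence and uniqueness of a $W^{1,1}$-solution for $L^1$-data (in the B\'enilan--Boccardo sense cited above) to identify $z$ as the limit of $(z_n)$, and conclude by lower semicontinuity of the $L^q$-norm.
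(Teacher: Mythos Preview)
The paper does not supply its own proof of this lemma; it is stated as a known fact with references to Amann (1983), B\'enilan--Boccardo (1999), and Ahn--Yoon (2019). Your duality argument is correct and is indeed one of the standard proofs of this type of $L^1$--$L^q$ resolvent estimate. One small caveat: at the endpoint $q=1$ you have $q'=\infty$, and the Calder\'on--Zygmund $W^{2,q'}$ estimate you invoke is not available there; however, the only thing you actually need is $\|\psi\|_\infty\le C\|\phi\|_\infty$, which follows immediately from the comparison principle for $-\Delta+I$ with Neumann boundary conditions (or simply from the case of any finite $p>N/2$, since $\phi\in L^\infty(\Omega)\subset L^p(\Omega)$). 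With that adjustment the argument goes through unchanged.
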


\section{Auxiliary functions}\label{sec.3}

In this section, we aim to improve the approach  proposed in \cite{FuJi2021a} and later developed in \cite{JLZ2022, FuSe2022b,XiaoJiang2022}. The main novelty is to introduce several auxiliary functions in connection with the original unknown functions $u$ and $v$ via linear parabolic/elliptic equations involving  $(u,v)$-dependent non-homogeneous terms.

\subsection{Auxiliary problems}\label{sec.3.1}

To begin with, we recall the definition of the first auxiliary function already introduced in \cite{FuJi2021a}. Let $\mathcal{A}$ be the Laplace operator on $L^2(\Omega)$ supplemented with homogeneous Neumann boundary conditions; that is, 
\begin{equation*}
	\mathrm{dom}(\mathcal{A}) \triangleq \{ z \in H^2(\Omega)\ :\ \nabla z\cdot \mathbf{n} = 0 \;\text{ on }\; \partial\Omega\}\,, \qquad  \mathcal{A}z \triangleq - \Delta z +  z\,, \quad z\in \mathrm{dom}(\mathcal{A})\,. 
\end{equation*} 
It is well-known that $\mathcal{A}$ generates an analytic semi-group $\big( e^{-t\mA} \big)_{t\ge 0}$ on $L^p(\Omega)$ and is invertible on $L^p(\Omega)$ for all $p\in (1,\infty)$. We then set 
\begin{equation*}
	w(t) \triangleq \mathcal{A}^{-1}[u(t)]\,,  \qquad t\in [0,T_{\mathrm{max}})\,.
\end{equation*}
In other words, $w$ satisfies the following Helmholtz problem:
\begin{equation*}
\begin{aligned}
	& \mathcal{A}[w]=-\Delta w+w=u, \qquad &(t,x) &\in  (0,T_{\text{max}}) \times\Omega, \\
	& \nabla w\cdot \mathbf{n} = 0, \qquad &(t,x) &\in  (0,T_{\text{max}}) \times\partial\Omega.
\end{aligned}
\end{equation*}
Note that $w(t,x)\geq 0$ on $ [0,T_{\mathrm{max}})\times\Omega$, due to the non-negativity of $u$ and the elliptic comparison principle. Thanks to the time continuity of $u$, 
\begin{equation*}
	w^{in}\triangleq w(0)=\mA^{-1}[u^{in}]\,,
\end{equation*} 
and it follows from the regularity assumption \eqref{ini} on the initial conditions that $w^{in}$ belongs to $W^{3,N+1}(\Omega)$, and in particular to $C^2(\bar{\Omega})$. 

Next, we define the linear parabolic operator $\mL:=\tau\partial_t+\mA$, and introduce the second auxiliary function $\Psi$ as the unique solution to
\begin{equation}\label{Psi}
\begin{aligned}
	& \mathcal{L}[\Psi]=\tau \partial_t\Psi-\Delta\Psi+\Psi =u\gamma(v), \qquad &(t,x) &\in  (0,T_{\text{max}}) \times\Omega, \\
	& \nabla \Psi\cdot \mathbf{n} = 0, \qquad &(t,x) &\in  (0,T_{\text{max}}) \times\partial\Omega, \\
	&\Psi(0) = 0, \qquad &x &\in \Omega.
\end{aligned}
\end{equation}
According to the parabolic comparison principle, we infer from the non-negativity of $u\gamma(v)$ that $\Psi(t,x)\geq 0$ for $(t,x)\in(0,T_{\text{max}}) \times\Omega$. For further use, we set $\psi\triangleq\mA^{-1}[\Psi]\geq0$ and readily deduce from~\eqref{Psi} that $\psi$ satisfies
\begin{equation}\label{psi}
\begin{aligned}
	& \mathcal{L}[\psi]=\tau \partial_t\psi-\Delta\psi+\psi =\mA^{-1}[u\gamma(v)], \qquad &(t,x) &\in  (0,T_{\text{max}}) \times\Omega, \\
	& \nabla \psi\cdot \mathbf{n} = 0, \qquad &(t,x) &\in  (0,T_{\text{max}}) \times\partial\Omega, \\
	&\psi(0) = 0, \qquad &x &\in \Omega.
\end{aligned}
\end{equation}
Lastly, set $\eta^{in}\triangleq w^{in}-v^{in}$ and let $\eta$ be the solution to the following heat equation:
\begin{equation}\label{eta}
\begin{aligned}
	& \mL[\eta]=\tau \partial_t\eta-\Delta\eta+\eta =0, \qquad &(t,x) &\in  (0,\infty) \times\Omega, \\
	& \nabla \eta\cdot \mathbf{n} = 0, \qquad &(t,x) &\in  (0,\infty) \times\partial\Omega, \\
	&\eta(0) = \eta^{in}, \qquad &x &\in \Omega;
\end{aligned}
\end{equation}
that is, $\eta(t,x)= e^{-t\mA/\tau}[\eta^{in}](x)$ for $(t,x)\in (0,\infty)\times\Omega$. Since both $u^{in}$ and $v^{in}$ are bounded due to~\eqref{ini} and the continuous embedding of $W^{1,N+1}(\Omega)$ in $L^\infty(\Omega)$ and since $\|\mA^{-1}[u^{in}]\|_\infty \le \|u^{in}\|_\infty$ by the elliptic comparison principle, the parabolic comparison principle applied to~\eqref{eta} readily implies the boundedness of $\eta$ with
\begin{equation}\label{etabd}
	\sup\limits_{ t\geq0}\|\eta(t)\|_\infty\leq\|\eta^{in}\|_\infty=\|\mA^{-1}[u^{in}]-v^{in}\|_\infty\leq \max\{\|u^{in}\|_\infty,\|v^{in}\|_\infty\}.
\end{equation}

\subsection{Two key identities} \label{sec.3.2}

In terms of the auxiliary functions, we find two useful key identities that play fundamental roles in the forthcoming analysis. The first one stated below is uncovered in \cite[Lemma~5]{FuJi2021a} (see also \cite[Lemma~3.1]{FuJi2020}) and can be obtained by taking $\mA^{-1}$ on both sides of~\eqref{ks1} and using the definition of $w$.

\begin{lemma}\label{lmkid1} For all $(t,x)\in[0,T_{\text{max}}) \times\Omega$, there holds
	\begin{equation}\label{keyid}
		\partial_t w + u\gamma(v) = \mA^{-1}[u\gamma(v)]\,.
	\end{equation}
\end{lemma}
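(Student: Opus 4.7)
The plan is to apply $\mA^{-1}$ to both sides of equation~\eqref{ks1} and exploit the simple algebraic relation $-\Delta = \mA - I$ to rewrite the right-hand side. First, since $w(t) = \mA^{-1}[u(t)]$ and $\mA^{-1}$ is a bounded linear operator on $L^2(\Omega)$, it commutes with time differentiation; together with the classical regularity of $u$ granted by Proposition~\ref{local}, this gives the pointwise identity $\partial_t w = \mA^{-1}[\partial_t u]$. Substituting~\eqref{ks1} then yields $\partial_t w = \mA^{-1}\bigl[\Delta(u\gamma(v))\bigr]$.

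Next I would observe that, by the definition of $\mA$, we may rewrite $\Delta(u\gamma(v)) = u\gamma(v) - \mA[u\gamma(v)]$. The crucial point here is that $u\gamma(v)$ actually lies in $\mathrm{dom}(\mA)$ at each $t \in (0,T_{\max})$: it has the required $H^2$-regularity thanks to the $C^{1,2}$ smoothness of $(u,v)$ and, by the boundary condition~\eqref{ks3}, it satisfies $\nabla(u\gamma(v))\cdot\mathbf{n}=0$ on $\partial\Omega$. Hence $\mA^{-1}\mA[u\gamma(v)]=u\gamma(v)$, and applying $\mA^{-1}$ to the above decomposition gives
\begin{equation*}
\mA^{-1}\bigl[\Delta(u\gamma(v))\bigr] = \mA^{-1}[u\gamma(v)] - u\gamma(v).
\end{equation*}
Combining the two displayed relations and rearranging yields~\eqref{keyid}.

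I don't expect any genuine obstacle: the entire argument reduces to the observation that, under Neumann boundary conditions, $\mA^{-1}$ and $\Delta$ satisfy $\mA^{-1}\Delta = \mA^{-1} - I$ on $\mathrm{dom}(\mA)$, and the regularity from Proposition~\ref{local} together with~\eqref{ks3} makes every step pointwise legitimate. The only thing worth double-checking is that $u\gamma(v)$ indeed satisfies the Neumann compatibility needed to apply $\mA^{-1}\mA$ as the identity, but this is precisely built into the no-flux boundary condition~\eqref{ks3} of the system.
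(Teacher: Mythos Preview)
Your proposal is correct and follows exactly the approach indicated in the paper: apply $\mA^{-1}$ to both sides of~\eqref{ks1}, use $\partial_t w = \mA^{-1}[\partial_t u]$, and exploit the identity $\Delta = I - \mA$ together with the fact that $u\gamma(v)\in\mathrm{dom}(\mA)$ by~\eqref{ks3}. The paper does not spell out these details but merely refers to earlier work and the one-line hint ``taking $\mA^{-1}$ on both sides of~\eqref{ks1} and using the definition of $w$'', which is precisely what you have carried out.
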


Building upon~\eqref{keyid} and the definition of the auxiliary functions $(\Psi,\psi,\eta)$, we  are now ready to  derive the other key identity.

\begin{lemma}\label{lmkid2}For all $(t,x)\in[0,T_{\text{max}}) \times\Omega$, there holds
	\begin{equation}\label{kid2}
		w+\tau\Psi=v+\tau\psi+\eta\,.
	\end{equation}
\end{lemma}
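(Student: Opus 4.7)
The plan is to verify~\eqref{kid2} by showing that both sides solve the same linear parabolic initial-boundary value problem associated with the operator $\mL = \tau \partial_t + \mA$, and to conclude by the uniqueness of its solution.

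First I would apply $\mL$ to $w + \tau \Psi$. Since $w$ is time-differentiable (being $\mA^{-1}[u]$ with $u$ smooth in time), one has $\mL[w] = \tau \partial_t w + \mA[w] = \tau \partial_t w + u$, while the defining equation~\eqref{Psi} for $\Psi$ gives $\mL[\Psi] = u\gamma(v)$. Invoking the first key identity~\eqref{keyid} from Lemma~\ref{lmkid1}, namely $\partial_t w = \mA^{-1}[u\gamma(v)] - u\gamma(v)$, the cross terms $\pm \tau u\gamma(v)$ cancel and I arrive at
\begin{equation*}
\mL[w+\tau \Psi] = u + \tau \mA^{-1}[u\gamma(v)].
\end{equation*}

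Next I would compute $\mL[v + \tau \psi + \eta]$. Equation~\eqref{ks2} reads $\mL[v] = u$, equation~\eqref{psi} reads $\mL[\psi] = \mA^{-1}[u\gamma(v)]$, and by~\eqref{eta}, $\mL[\eta] = 0$. Summing, I find exactly the same right-hand side $u + \tau \mA^{-1}[u\gamma(v)]$, so the difference $z := (w+\tau\Psi) - (v+\tau\psi+\eta)$ satisfies the homogeneous equation $\mL[z]=0$. Both $w$ and $\Psi$ satisfy homogeneous Neumann conditions by construction, and so do $v$, $\psi$, $\eta$; hence $\nabla z \cdot \mathbf{n} = 0$ on $(0,T_{\max})\times \partial\Omega$. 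At $t=0$, $\Psi(0)=\psi(0)=0$, $\eta(0)=w^{in}-v^{in}$, and $w(0)=w^{in}$, so $z(0) = w^{in} - (v^{in} + w^{in} - v^{in}) = 0$.

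Thus $z$ solves the homogeneous heat equation $\mL[z]=0$ with zero initial data and Neumann boundary conditions, and uniqueness (or equivalently the fact that $e^{-t\mA/\tau}$ is a well-defined semigroup on $L^2(\Omega)$) yields $z \equiv 0$, which is~\eqref{kid2}. There is no real obstacle here beyond bookkeeping; the one point to be slightly careful about is the regularity needed to differentiate $w$ in time and to invoke uniqueness, but this is supplied by Proposition~\ref{local} together with the smoothing properties of $\mA^{-1}$ and the parabolic semigroup $e^{-t\mA/\tau}$, which ensure that all quantities appearing in the computation are classical solutions of their respective equations on $[0,T_{\max})\times \bar\Omega$.
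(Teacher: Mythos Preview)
Your proof is correct and follows essentially the same approach as the paper: both arguments compute $\mL[w+\tau\Psi]$ and $\mL[v+\tau\psi+\eta]$ using~\eqref{keyid}, \eqref{Psi}, \eqref{psi}, \eqref{eta}, and~\eqref{ks2}, observe that they coincide, check the initial and boundary data, and conclude via uniqueness for the homogeneous heat equation. The only cosmetic difference is that the paper writes the identity as $\mL[w+\tau\Psi]=\mL[v+\tau\psi]$ before bringing in $\eta$, whereas you compute the common value $u+\tau\mA^{-1}[u\gamma(v)]$ explicitly.
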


\begin{proof}
	Multiplying the key identity~\eqref{keyid} by $\tau$ and adding $u=\mA[w]$ to both sides of the resulting identity, we obtain that
	\begin{equation*}
		\tau \partial_t w +\mA[w]+\tau u\gamma(v)=\tau\mA^{-1}[u\gamma(v)]+u.
	\end{equation*}
Since $u=\mL[v]$ by~\eqref{ks2}, it readily follows from the definitions~\eqref{Psi} and~\eqref{psi} of $\Psi$ and $\psi$ that the above identity can be rewritten as
\begin{equation*}
	\mL[w+\tau\Psi]=\mL[v+\tau\psi].
\end{equation*}	
Since $\mL[\eta]=0$ and $\eta^{in}=w^{in}-v^{in}$ by~\eqref{eta}, one easily verifies that
\begin{align*}
	&  \mL[w+\tau\Psi-v-\tau\psi-\eta]=0, \qquad &(t,x) &\in  (0,T_{\text{max}}) \times\Omega, \\
	& \nabla (w+\tau\Psi-v-\tau\psi-\eta)\cdot \mathbf{n} = 0, \qquad &(t,x) &\in  (0,T_{\text{max}}) \times\partial\Omega, \\
	&(w+\tau\Psi-v-\tau\psi-\eta)(0) = 0, \qquad &x &\in \Omega.
\end{align*}	
Then identity~\eqref{kid2} follows from the uniqueness of classical solutions to heat equations. This completes the proof.
\end{proof}

We conclude this section with a straightforward consequence of~\eqref{kid2} and the boundedness~\eqref{etabd} of $\eta$.

\begin{corollary}\label{corA}
For all $(t,x)\in [0,T_{\text{max}}) \times\bar{\Omega}$,
\begin{subequations}
\begin{equation}\label{corA1}
	v+\tau\psi\leq w+\tau\Psi+\max\{\|u^{in}\|_\infty,\|v^{in}\|_\infty\},
\end{equation}and
\begin{equation}\label{corA2}
 w+\tau\Psi\leq v+\tau\psi+\max\{\|u^{in}\|_\infty,\|v^{in}\|_\infty\}.
\end{equation}
\end{subequations}
\end{corollary}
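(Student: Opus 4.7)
The plan is to read the two desired inequalities as nothing more than the pointwise upper and lower bounds on $\eta$ furnished by \eqref{etabd}, once the identity~\eqref{kid2} is used to isolate $\eta$ on one side.

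First, I would apply Lemma~\ref{lmkid2} and rearrange to obtain
\begin{equation*}
	(w+\tau\Psi) - (v+\tau\psi) = \eta \qquad \text{on } [0,T_{\text{max}})\times\bar{\Omega}.
\end{equation*}
Next, I would invoke the pointwise control $|\eta(t,x)| \leq \max\{\|u^{in}\|_\infty,\|v^{in}\|_\infty\}$ supplied by \eqref{etabd}, which splits into the two one-sided bounds
\begin{equation*}
	\eta \leq \max\{\|u^{in}\|_\infty,\|v^{in}\|_\infty\} \quad \text{and} \quad -\eta \leq \max\{\|u^{in}\|_\infty,\|v^{in}\|_\infty\}.
\end{equation*}
Substituting the first into the displayed identity gives \eqref{corA2}, while the second yields \eqref{corA1} after transposing $\eta$ to the other side.

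No genuine obstacle is expected here: the result is a clean algebraic consequence of two facts proved earlier, which is consistent with the author's characterization of it as a straightforward consequence. The only subtle point worth verifying is that the bounds extend all the way up to the closure $\bar{\Omega}$, which follows because $(u,v)$ is continuous on $[0,T_{\text{max}})\times\bar{\Omega}$ by Proposition~\ref{local}, and hence so are the auxiliary quantities $w$, $\Psi$, $\psi$, and $\eta$, the last of these being built from initial data $\eta^{in}=w^{in}-v^{in}\in C^2(\bar{\Omega})$ via the analytic semigroup generated by $\mathcal{A}$.
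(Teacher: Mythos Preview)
Your proposal is correct and matches the paper's approach exactly: the paper states the corollary as ``a straightforward consequence of~\eqref{kid2} and the boundedness~\eqref{etabd} of $\eta$'' without supplying further details, and your argument spells out precisely that combination.
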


\section{Intertwined estimates on $(v,w,\Psi,\psi)$ via comparison arguments}\label{sec.4}

Complementing additional assumptions on $\gamma$, we derive a direct upper control of $v$ by the auxiliary function $\Psi$ via comparison techniques. 

\subsection{Monotone non-decreasing motility functions}\label{sec.4.1}

We begin with the simple case $\gamma'\geq0$. We first remark that, if $\gamma$ has a generic upper bound on $(0,\infty)$, then the monotonicity of $\gamma$ and~\eqref{e00} entail that $\gamma(v)$ is also bounded from below by a positive constant $\gamma(v_*)$ on $[0,T_{\mathrm{max}})\times\Omega$. We may thus directly apply \cite[Theorem~1.2]{XiaoJiang2022} to get global existence and uniform-in-time boundedness of $v$ and $u$. We are then left with the case where $\gamma$ is non-decreasing on $(0,\infty)$ and is unbounded, i.e., 
 \begin{equation}\label{ginfity0}
 	\lim\limits_{s\rightarrow\infty}\gamma(s)=\infty.
 \end{equation} 
  
\begin{lemma}\label{lem3a}
Assume that $\gamma$ satisfies~\eqref{hypgam1}, $\gamma'\geq0$ on $(0,\infty)$ and~\eqref{ginfity0}. For any $\varepsilon>0$, there is $C_\varepsilon>0$ depending on $\varepsilon$, $\gamma$ and the initial data such that 
	\begin{equation}\label{b1a}
		v(t,x)\leq \varepsilon \Psi(t,x) +C_\varepsilon\;\;\text{in}\;\;\;\;[0,T_{\mathrm{max}})\times\Omega
	\end{equation}
	and
	\begin{equation}\label{b1b}
	w(t,x)+	(\tau-\varepsilon) \Psi(t,x)\leq \tau\psi(t,x)+C_\varepsilon \;\;\;\;\text{in}\;\;[0,T_{\mathrm{max}})\times\Omega.
	\end{equation}
\end{lemma}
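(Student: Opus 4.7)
The plan is to prove the pointwise upper bound~\eqref{b1a} on $v$ by a parabolic $L^2$-energy argument applied to the difference $\varepsilon\Psi-v$, and then to deduce~\eqref{b1b} directly from~\eqref{b1a} using the key identity~\eqref{kid2} together with the uniform bound~\eqref{etabd} on $\eta$.

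For~\eqref{b1a}, I would first use~\eqref{ginfity0} and the monotonicity of $\gamma$ to fix $K_\varepsilon>0$ such that $\gamma(s)\geq 1/\varepsilon$ for every $s\geq K_\varepsilon$, and then set $C_\varepsilon:=\max\{K_\varepsilon,\|v^{in}\|_\infty\}$. The comparison is organised around the auxiliary quantity $z:=\varepsilon\Psi-v+C_\varepsilon$, which, by a direct combination of~\eqref{ks2} and~\eqref{Psi}, satisfies
\begin{equation*}
\mL[z]=u(\varepsilon\gamma(v)-1)+C_\varepsilon,\qquad \nabla z\cdot\mathbf{n}=0 \text{ on } \partial\Omega,\qquad z(0)=C_\varepsilon-v^{in}\geq 0.
\end{equation*}
The crucial observation is that on the set $\{z<0\}$ one has $v>\varepsilon\Psi+C_\varepsilon\geq K_\varepsilon$, since $\Psi\geq 0$, so the monotonicity of $\gamma$ forces $\gamma(v)\geq 1/\varepsilon$ and hence $\mL[z]\geq C_\varepsilon>0$ there.

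To conclude, I would test the equation for $z$ against its negative part $z_-:=\max\{-z,0\}$, integrate by parts using the homogeneous Neumann boundary condition and the standard identity $\nabla z_-=-\mathbf{1}_{\{z<0\}}\nabla z$, and exploit the sign property above to arrive at the differential inequality
\begin{equation*}
\frac{\tau}{2}\frac{d}{dt}\|z_-\|_2^2 + \|\nabla z_-\|_2^2 + \|z_-\|_2^2 \leq 0.
\end{equation*}
Since $z_-(0)=0$ by the choice of $C_\varepsilon$, Gr\"onwall's inequality forces $z_-\equiv 0$ on $[0,T_{\mathrm{max}})\times\bar\Omega$, which is~\eqref{b1a}. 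The bound~\eqref{b1b} then follows by rewriting~\eqref{kid2} as $v=w+\tau\Psi-\tau\psi-\eta$, subtracting $\varepsilon\Psi$ on both sides, and invoking~\eqref{b1a} together with~\eqref{etabd}:
\begin{equation*}
w+(\tau-\varepsilon)\Psi-\tau\psi = v-\varepsilon\Psi+\eta \leq C_\varepsilon + \|\eta^{in}\|_\infty,
\end{equation*}
so that absorbing $\|\eta^{in}\|_\infty$ into the constant yields~\eqref{b1b}.

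The main obstacle is the selection of $C_\varepsilon$: it must simultaneously dominate $\|v^{in}\|_\infty$ (so that $z_-(0)=0$) and exceed the threshold $K_\varepsilon$ (so that the reaction term $u(\varepsilon\gamma(v)-1)$ has the correct sign on the bad set $\{z<0\}$). This double constraint is precisely where both the divergence~\eqref{ginfity0} of $\gamma$ at infinity and its monotonicity are genuinely used; in the general non-monotone setting treated next, the same strategy breaks down and a more elaborate comparison argument is required.
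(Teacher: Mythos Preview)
Your argument is correct, but it differs from the paper's proof in a notable way. The paper introduces the Kirchhoff-type antiderivative $\Gamma(s)=\int_{v_*}^s\gamma(\sigma)\,\rd\sigma$ and, using $\gamma'\ge 0$ together with $\Gamma(s)\le s\gamma(s)$, verifies that $u\gamma(v)\ge \mL[\Gamma(v)]$. A direct application of the parabolic comparison principle then yields $\Gamma(v)\le \Psi+\|\Gamma(v^{in})\|_\infty$, and the bound~\eqref{b1a} follows from the elementary property $s\le\varepsilon\Gamma(s)+s_\varepsilon$ (Lemma~\ref{lem.G}). Your route instead works directly with $z=\varepsilon\Psi-v+C_\varepsilon$: you compute $\mL[z]=u(\varepsilon\gamma(v)-1)+C_\varepsilon$, observe that on $\{z<0\}$ one has $v>K_\varepsilon$ and hence $\varepsilon\gamma(v)\ge 1$ by monotonicity, and conclude by testing against $z_-$. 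Both arguments derive~\eqref{b1b} in the same way, from~\eqref{kid2} and~\eqref{etabd} (which is exactly the content of Corollary~\ref{corA}).

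Your approach is arguably more elementary in that it avoids the intermediate nonlinear transform $\Gamma$ and the two-step passage from $\Gamma(v)$ back to $v$. On the other hand, the paper's $\Gamma$-based argument is what generalises cleanly to the non-monotone setting of Lemma~\ref{lem3}: there one replaces $\gamma$ by a constructed monotone minorant $\gamma_{j,l}$ and repeats the comparison with $\Gamma_{j,l}$, whereas your sign argument on $\{z<0\}$ would fail without monotonicity of $\gamma$ itself---as you correctly anticipate in your closing remark.
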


Before proceeding with the proof of Lemma~\ref{lem3a}, let us gather in the next lemma some useful properties of the indefinite integral 
\begin{equation*}
		\Gamma(s) \triangleq \int_{v_*}^s \gamma(\sigma)\ \rd\sigma, \qquad s\geq v_*.
\end{equation*} 
of $\gamma$ when $\gamma$ is monotone non-decreasing and unbounded.

\begin{lemma}\label{lem.G}
Consider $\gamma$ satisfying~\eqref{hypgam1} and $\gamma'\geq0$ on $(0,\infty)$. Then
\begin{equation}
0 \leq \Gamma(s)\leq s\gamma(s), \qquad s\ge v_*. \label{Gam1}    
\end{equation}
Assume further that $\gamma$ satisfies~\eqref{ginfity0}. Then, for any $\varepsilon>0$, there is $s_\varepsilon>v_*$ depending only on $\gamma$ such that
\begin{equation}
    s \le \varepsilon \Gamma(s) + s_\varepsilon, \qquad s\ge v_*. \label{Gam2}    
\end{equation}
\end{lemma}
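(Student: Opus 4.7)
The plan is to handle the two assertions separately; both reduce to elementary manipulations of the integral defining $\Gamma$, and I do not anticipate any real obstacle. This looks like a bookkeeping lemma isolating two monotonicity facts that will later be applied pointwise to $v(t,x)$ in order to convert bounds on $\Gamma(v)$ (which by Lemma~\ref{lmkid1} and~\eqref{ks1} is tied to $u\gamma(v)$ and hence to $\Psi$) into bounds on $v$ itself.

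For~\eqref{Gam1}, the lower bound is immediate from the positivity hypothesis $\gamma>0$ in~\eqref{hypgam1} together with $s\geq v_*$. For the upper bound I would invoke $\gamma'\geq 0$: on the interval of integration $[v_*,s]$ one has $\gamma(\sigma)\leq\gamma(s)$, so pulling $\gamma(s)$ out of the integral yields $\Gamma(s)\leq (s-v_*)\gamma(s)\leq s\gamma(s)$, the last inequality using $v_*>0$ from~\eqref{e00}.

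For~\eqref{Gam2}, fix $\varepsilon>0$. The idea is that~\eqref{ginfity0} forces $\gamma$ to be eventually as large as we like, so its primitive grows superlinearly. Quantitatively, I would pick a threshold $\sigma_\varepsilon>v_*$ (depending only on $\gamma$ and $\varepsilon$) such that $\gamma(\sigma)\geq 2/\varepsilon$ for every $\sigma\geq\sigma_\varepsilon$, which is possible by~\eqref{ginfity0}. For $s\geq\sigma_\varepsilon$, discarding the non-negative contribution of $\Gamma(s)$ on $[v_*,\sigma_\varepsilon]$ and using the pointwise lower bound on $\gamma$ over $[\sigma_\varepsilon,s]$ yields $\varepsilon\Gamma(s)\geq 2(s-\sigma_\varepsilon)$, hence $s\leq \varepsilon\Gamma(s)$ as soon as $s\geq 2\sigma_\varepsilon$. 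Setting $s_\varepsilon:=2\sigma_\varepsilon$, the two regimes $v_*\leq s\leq s_\varepsilon$ (where trivially $s\leq s_\varepsilon\leq s_\varepsilon+\varepsilon\Gamma(s)$) and $s\geq s_\varepsilon$ (where $s\leq\varepsilon\Gamma(s)\leq s_\varepsilon+\varepsilon\Gamma(s)$) combine to give the claimed inequality, and the requirement $s_\varepsilon>v_*$ is automatic since $\sigma_\varepsilon>v_*>0$.
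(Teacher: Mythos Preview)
Your proof is correct and follows essentially the same approach as the paper's: both parts use the monotonicity of $\gamma$ to bound the integrand, and for~\eqref{Gam2} both pick a threshold beyond which $\gamma$ is large and split into two regimes. The only cosmetic difference is that the paper chooses the threshold so that $\gamma\geq 1/\varepsilon$ and obtains $s\leq \varepsilon\Gamma(s)+s_\varepsilon$ directly from $\Gamma(s)\geq (s-s_\varepsilon)/\varepsilon$, whereas you take $\gamma\geq 2/\varepsilon$ and double the threshold to get the slightly stronger $s\leq\varepsilon\Gamma(s)$ in the far regime.
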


\begin{proof}
First, since $\gamma$ is non-decreasing and positive, there holds
\begin{equation*}
	0\leq \Gamma(s)\leq \gamma(s)(s-v_*)\leq s\gamma(s), \qquad s\ge v_*.
\end{equation*}   
Next, let $\varepsilon>0$. Owing to~\eqref{ginfity0}, there is $s_\varepsilon> v_*$ such that $\gamma(s)\ge 1/\varepsilon$ for all $s\ge s_\varepsilon$. Then, either $s\in [v_*,s_\varepsilon]$ and~\eqref{Gam2} is obviously true due to the non-negativity of $\Gamma$. Or $s>s_\varepsilon$ and the monotonicity and positivity of $\gamma$ imply that
\begin{equation*}
	\Gamma(s)=\int_{v_*}^s \gamma(\sigma)\ \rd\sigma=\int_{s_\varepsilon}^s \gamma(\sigma)\ \rd\sigma+\int_{v_*}^{s_\varepsilon} \gamma(\sigma)\ \rd\sigma\geq (s-s_\varepsilon)/\varepsilon,
\end{equation*}
from which~\eqref{Gam2} readily follows.
\end{proof}

\begin{proof}[Proof of Lemma~\ref{lem3a}]
It follows from~\eqref{ks2}, \eqref{Gam1} and $\gamma'\geq0$ that, for $(t,x)\in (0,T_{\mathrm{max}})\times\Omega$,
\begin{equation*}
	u\gamma(v)=\gamma(v)(\tau \partial_t v-\Delta v+v)=\mathcal{L}[\Gamma(v)]+\gamma'(v)|\nabla v|^2+v\gamma(v)-\Gamma(v)\geq \mathcal{L}[\Gamma(v)].
\end{equation*}
Thus, 
\begin{equation*}
	\mL[\Psi+e^{-t\mA/\tau}[\Gamma(v^{in})]]\geq \mL[\Gamma(v)] \;\;\;\;\text{ in }\;\; (0,T_{\mathrm{max}})\times\Omega,
\end{equation*} 
since $\mathcal{L}[\Psi]=u\gamma(v)$, and $\mL[e^{-t\mA/\tau}[\Gamma(v^{in})]]=0$. Furthermore, 
\begin{equation*}
    \nabla (\Psi+e^{-t\mA/\tau}[\Gamma(v^{in})])\cdot\mathbf{n}=\nabla\Gamma(v)\cdot\mathbf{n}=0 \;\;\;\;\text{ on }\;\;(0,T_{\mathrm{max}})\times\partial\Omega
\end{equation*}
and $\big(\Psi+e^{-t\mA/\tau}[\Gamma(v^{in})]\big)(0)=\Gamma(v^{in})$. Consequently, we deduce from the parabolic comparison principle that 
\begin{equation}\label{inc1}
	\Gamma(v)(t,x)\leq\Psi(t,x)+e^{-t\mA/\tau}[\Gamma(v^{in})]\leq \Psi(t,x)+\|\Gamma(v^{in})\|_\infty \;\;\;\;\text{in}\;\;[0,T_{\mathrm{max}})\times\Omega.
\end{equation} 

Consider now $\varepsilon>0$. Combining~\eqref{Gam2} and~\eqref{inc1} entails that
\begin{equation*}
	v(t,x)\leq \varepsilon\Psi(t,x)+s_\varepsilon +\varepsilon\|\Gamma(v^{in})\|_\infty\;\;\;\;\text{in}\;\;[0,T_{\mathrm{max}})\times\Omega.
\end{equation*}
Finally, \eqref{b1b} follows from~\eqref{corA2} and~\eqref{b1a}, and the proof is complete.
\end{proof}

\begin{remark}
If we only assume that $\gamma$ satisfies~\eqref{hypgam1} and $\gamma'\geq 0$ on $(0,\infty)$, but not~\eqref{ginfity0}, then we still obtain the following upper bound on $v$ in terms of $\Psi$: 
\begin{equation*}
	v(t,x)\leq \big(\Psi(t,x)+\|\Gamma(v^{in})\|_\infty\big)/\gamma(v_*)+v_* \;\;\;\;\text{in}\;\;[0,T_{\mathrm{max}})\times\Omega,
\end{equation*}
as a consequence of the estimate
\begin{equation*}
	\Gamma(s)=\int_{v_*}^s\gamma(\sigma)\ \rd\sigma\geq \gamma(v_*)(s-v_*).
\end{equation*}
The above upper bound on $v$ is weaker than~\eqref{b1a} and does not allow us to perform the next step of the proof provided in Lemma~\ref{prop1} below.
\end{remark}

\subsection{Asymptotically non-degenerate motility functions}\label{sec.4.2}

We now consider the generic case where $\gamma$ is not necessarily monotone non-decreasing, i.e., $\gamma'$ may change sign. Recall that we have the following asymptotically non-degenerate assumption:
\begin{equation}
	\gamma_\infty = \liminf\limits_{s\rightarrow\infty}\gamma(s)>1/\tau. \tag{1.3}
\end{equation}

\begin{lemma}\label{lem3}
Suppose that $\gamma$ satisfies \eqref{hypgam1} and \eqref{ginf}. Then, for any $\varepsilon\in(0,1)$, there is $C_\varepsilon>0$ depending on $\varepsilon$, $\gamma$ and the initial data such that 
\begin{subequations}
\begin{equation}\label{b1a2}
	v\leq 	\frac{\tau\Psi}{\varepsilon (\tau\gamma_\infty-1)+1} + C_\varepsilon \;\;\;\;\text{in}\;\;(0,T_{\mathrm{max}})\times\Omega
\end{equation}
and
\begin{equation}\label{b1b2}
	w+\frac{\varepsilon(\tau\gamma_\infty-1)\tau}{\varepsilon(\tau\gamma_\infty-1)+1}\Psi\leq\tau\psi+C_\varepsilon \;\;\;\;\text{in}\;\;(0,T_{\mathrm{max}})\times\Omega.
\end{equation}
\end{subequations}
In addition, if $\gamma_\infty=\infty$; that is,
\begin{equation*}
	\lim\limits_{s\rightarrow\infty}\gamma(s)=\infty,
\end{equation*}
then, for any $\varepsilon>0$, there exists $C_{\varepsilon}>0$ depending on $\varepsilon$, $\tau$, $\gamma$ and the initial data such that
\begin{subequations}
\begin{equation}\label{rem1a}
    v\leq \varepsilon\Psi+C_{\varepsilon} \;\;\;\;\text{in}\;\;(0,T_{\mathrm{max}})\times\Omega,
\end{equation}	
and
\begin{equation}\label{rem1b}
    w+(\tau-\varepsilon)\Psi\leq \tau\psi+C_\varepsilon \;\;\;\;\text{in}\;\;(0,T_{\mathrm{max}})\times\Omega.
\end{equation}
\end{subequations}
\end{lemma}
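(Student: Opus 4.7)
The plan is to mirror the proof of Lemma~\ref{lem3a} with the antiderivative $\Gamma$ of $\gamma$ replaced by the antiderivative $G_\varepsilon$ of a carefully chosen non-decreasing $C^1$ minorant $g_\varepsilon$ of $\gamma$. The monotonicity hypothesis $\gamma'\geq 0$ was used in Lemma~\ref{lem3a} purely to secure two sign conditions, namely $\gamma'(v)|\nabla v|^2 \geq 0$ and $v\gamma(v) - \Gamma(v) \geq 0$; swapping $\gamma$ for such a non-decreasing $g_\varepsilon$ restores both (with $g_\varepsilon$ in place of $\gamma$), at the cost of a slightly weaker pointwise bound, while the extra term $u(\gamma(v)-g_\varepsilon(v))$ produced by the substitution already has the correct sign since $g_\varepsilon \leq \gamma$.

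Given $\varepsilon\in(0,1)$, I will set $a_\varepsilon := \varepsilon\gamma_\infty + (1-\varepsilon)/\tau$, which lies strictly between $1/\tau$ and $\gamma_\infty$ thanks to~\eqref{ginf} and satisfies the algebraic identity $1/a_\varepsilon = \tau/\alpha_\varepsilon$ with $\alpha_\varepsilon := \varepsilon(\tau\gamma_\infty-1) + 1$. From the $\liminf$ definition of $\gamma_\infty$, one may pick $s_\varepsilon > v_*$ such that $\gamma \geq a_\varepsilon$ on $[s_\varepsilon, \infty)$ and then construct a non-decreasing $g_\varepsilon\in C^1([v_*, \infty))$ with $0 < g_\varepsilon \leq \gamma$ on $[v_*, \infty)$ and $g_\varepsilon \equiv a_\varepsilon$ on $[s_\varepsilon + 1, \infty)$: take $g_\varepsilon$ equal to a positive constant $\leq \min\{a_\varepsilon, \inf_{[v_*, s_\varepsilon]}\gamma\}$ on $[v_*, s_\varepsilon]$, monotonically and smoothly interpolate up to $a_\varepsilon$ on $[s_\varepsilon, s_\varepsilon+1]$, and keep it constant thereafter. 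Setting $G_\varepsilon(s):=\int_{v_*}^s g_\varepsilon(\sigma)\,\rd\sigma$, repeating the computation of Lemma~\ref{lem3a} yields
\begin{equation*}
\mathcal{L}[\Psi - G_\varepsilon(v)] = u\bigl(\gamma(v)-g_\varepsilon(v)\bigr) + \bigl(v\, g_\varepsilon(v) - G_\varepsilon(v)\bigr) + g_\varepsilon'(v)|\nabla v|^2 \geq 0,
\end{equation*}
each term being non-negative by construction (the middle one through the analogue of Lemma~\ref{lem.G} for the non-decreasing $g_\varepsilon$). Parabolic comparison then delivers $G_\varepsilon(v) \leq \Psi + \|G_\varepsilon(v^{in})\|_\infty$, in direct analogy with~\eqref{inc1}.

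Since $g_\varepsilon\equiv a_\varepsilon$ on $[s_\varepsilon+1,\infty)$, the linear minoration $G_\varepsilon(s) \geq a_\varepsilon(s - s_\varepsilon - 1)$ for $s \geq s_\varepsilon + 1$, combined with the trivial bound $v \leq s_\varepsilon + 1$ on the complementary region, gives $v \leq \Psi/a_\varepsilon + C_\varepsilon = \tau\Psi/\alpha_\varepsilon + C_\varepsilon$, which is~\eqref{b1a2}. Estimate~\eqref{b1b2} then follows at once by inserting $v = w + \tau\Psi - \tau\psi - \eta$ from the identity~\eqref{kid2} into~\eqref{b1a2} and invoking the uniform bound~\eqref{etabd} on $\eta$. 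The case $\gamma_\infty = \infty$ is handled by the same recipe with the target slope $a_\varepsilon$ replaced by the arbitrarily large quantity $1/\varepsilon$, which produces~\eqref{rem1a} and thence~\eqref{rem1b}. The only non-routine element is the construction of the smooth minorant $g_\varepsilon$, which is made possible by~\eqref{ginf} together with the continuity and strict positivity of $\gamma$ on $[v_*, \infty)$; once $g_\varepsilon$ is in hand, the remainder of the proof is a direct adaptation of the monotone case and no further technical obstacle is expected.
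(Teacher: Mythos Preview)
Your proposal is correct and follows essentially the same approach as the paper: both construct a non-decreasing $C^1$ minorant of $\gamma$ that is eventually equal to a prescribed constant close to $\gamma_\infty$ (the paper's $\gamma_{j,l}$ is your $g_\varepsilon$, with the paper's extra index $j$ serving only to treat the cases $\gamma_\infty<\infty$ and $\gamma_\infty=\infty$ in a single formula via $\min\{j,\gamma_\infty\}$, whereas you handle them separately), then apply the parabolic comparison argument of Lemma~\ref{lem3a} to the antiderivative of this minorant and deduce~\eqref{b1b2} and~\eqref{rem1b} from~\eqref{corA2}.
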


\begin{proof}
Let $\varepsilon>0$ and $j\ge 1$. Owing to the assumption~\eqref{ginf}, there is $s_{j,\varepsilon}>v_*$ such that
\begin{equation*}
	\gamma(s)\geq \alpha_{j,\varepsilon} \triangleq \varepsilon \min\{j , \gamma_\infty\} + (1-\varepsilon) \frac{1}{\tau},\qquad\text{for all}\;s\geq s_{j,\varepsilon}.
\end{equation*}
Moreover, by the continuity and positivity of $\gamma$ on $(0,\infty)$, we have $\beta_{j,\varepsilon} \triangleq \min_{v_*\leq s\leq s_{j,\varepsilon}}\gamma(s)\in (0,\infty)$. Therefore, we can construct a non-decreasing positive $C^1$-function $\gamma_{j,l}$ such that 
\begin{equation*}
\gamma_{j,l}= \min\{\alpha_{j,\varepsilon}/2,\beta_{j,\varepsilon}\} \;\;\text{ on }\;\; [v_*,s_{j,\varepsilon}] \;\;\text{ and }\;\; \gamma_{j,l}=\alpha_{j,\varepsilon} \;\;\text{ on }\;\; [s_{j,\varepsilon}+1,\infty).
\end{equation*}
Notice that  $\gamma_{j,l}\leq \gamma$ on $[v_*,\infty)$. We then argue as in the proof of Lemma~\ref{lem3a} to deduce that
\begin{equation*}
	u\gamma(v)\geq u\gamma_{j,l}(v) = (\tau\partial_t v-\Delta v+v)\gamma_{j,l}(v)
    \geq \tau \partial_t\Gamma_{j,l}(v)-\Delta \Gamma_{j,l}(v)+\Gamma_{j,l}(v)
\end{equation*}
in $(0,T_{\mathrm{max}})\times\Omega$, where 
\begin{equation*}
    \Gamma_{j,l}(s):=\int_{v_*}^s \gamma_{j,l}(\sigma)\ \rd \sigma \le \Gamma(s), \qquad s\ge v_*,
\end{equation*}
and, subsequently, that
\begin{equation*}
	\Gamma_{j,l}(v)(t,x)\leq \Psi(t,x)+\|\Gamma_{j,l}(v^{in})\|_\infty \le \Psi(t,x)+\|\Gamma(v^{in})\|_\infty \;\;\;\;\text{in}\;\;(0,T_{\mathrm{max}})\times\Omega,
\end{equation*}
by the parabolic comparison principle. Now, we notice that, for $s\geq s_{j,\varepsilon}+1$, the positivity of $\gamma_{j,l}$ ensures that
\begin{equation*}
	\Gamma_{j,l}(s) \geq \int_{s_{j,\varepsilon}+1}^s \gamma_{j,l}(\sigma) \ \rd\sigma = \alpha_{j,\varepsilon}(s-s_{j,\varepsilon}-1).
\end{equation*}
It then follows that, for $(t,x)\in [0,T_{\mathrm{max}})\times\Omega$,
\begin{equation*}
	\alpha_{j,\varepsilon} (v(t,x)-s_{j,\varepsilon}-1)_+\leq \Gamma_{j,l}(v)(t,x)\leq \Psi(t,x)+\|\Gamma(v^{in})\|_\infty,
\end{equation*}
whence
\begin{equation}\label{n3a}
	v(t,x)\leq  \frac{\tau\Psi(t,x)}{\varepsilon\big( \tau \min\{j,\gamma_\infty\} - 1 \big)+ 1} +\frac{\tau\|\Gamma(v^{in})\|_\infty}{\varepsilon\big( \tau \min\{j,\gamma_\infty\} - 1 \big)+ 1} + s_{j,\varepsilon} +1.
\end{equation}
At this point, either $\gamma_\infty<\infty$ and we fix any $j\ge\gamma_\infty$ in~\eqref{n3a} to obtain~\eqref{b1a2}, from which~\eqref{b1b2} follows thanks to~\eqref{corA2}. Or $\gamma_\infty=\infty$ and we pick $j_\varepsilon$ large enough satisfying $\tau-\varepsilon\le \varepsilon^2(\tau j_\varepsilon -1)$ and apply~\eqref{n3a} with $j=j_\varepsilon$ to obtain
\begin{equation*}
    v(t,x)\leq \varepsilon \Psi(t,x) +\frac{\tau\|\Gamma(v^{in})\|_\infty}{\varepsilon\big( \tau j_\varepsilon - 1 \big)+ 1} + s_{j_\varepsilon,\varepsilon} +1 ,\;\;\;\text{in}\;\;(0,T_{\mathrm{max}})\times\Omega.  
\end{equation*}
We have shown~\eqref{rem1a} and we complete the proof by deducing~\eqref{rem1b} from~\eqref{corA2} as above.
\end{proof}

\begin{lemma}\label{prop1}
Under the assumptions of, either Lemma~\ref{lem3a}, or Lemma~\ref{lem3}, there is a positive constant $C>0$ depending on $\tau$, $\gamma$ and the initial data such that 
\begin{equation*}
	\|\Psi(t)\|_\infty\leq C\|\Psi(t)\|_1+C, \qquad t\in [0,T_{\mathrm{max}}).
\end{equation*}
\end{lemma}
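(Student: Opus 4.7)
The plan is to combine the pointwise upper bound $\Psi\le C_1\psi+C_2$, which follows from~\eqref{b1b} (respectively~\eqref{b1b2}) upon dropping the non-negative term $w$ on the left-hand side, with elliptic regularity applied to the equation $-\Delta\psi+\psi=\Psi$ satisfied by the auxiliary function $\psi=\mA^{-1}[\Psi]$, and to bootstrap from $L^1$ up to $L^\infty$.

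First I would fix $\varepsilon\in(0,\tau)$ in~\eqref{b1b} (respectively $\varepsilon\in(0,1)$ in~\eqref{b1b2}), so that the coefficient $\lambda$ of $\Psi$ on the left-hand side is a positive constant depending only on $\tau$, $\gamma$ and the initial data. Discarding $w\ge 0$ then yields the pointwise inequality
\begin{equation*}
\Psi(t,x)\le \frac{\tau}{\lambda}\,\psi(t,x)+C,\qquad (t,x)\in[0,T_{\mathrm{max}})\times\Omega,
\end{equation*}
for some $C>0$ of the same nature. Applying Lemma~\ref{lm2} to the Neumann problem for $\psi$ gives $\|\psi(t)\|_{q_1}\le C\|\Psi(t)\|_1$ for any $q_1\in(1,N/(N-2)_+)$, and transferring this via the pointwise bound furnishes the first integrability gain $\|\Psi(t)\|_{q_1}\le C(\|\Psi(t)\|_1+1)$.

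Next I would iterate using standard elliptic $L^p$-regularity for the same equation: whenever $\Psi(t)\in L^{q_k}(\Omega)$, the Calder\'on--Zygmund estimate gives $\|\psi(t)\|_{W^{2,q_k}}\le C\|\Psi(t)\|_{q_k}$, and the Sobolev embedding $W^{2,q_k}(\Omega)\hookrightarrow L^{q_{k+1}}(\Omega)$ with $1/q_{k+1}=(1/q_k-2/N)_+$ upgrades this to $\|\psi(t)\|_{q_{k+1}}\le C\|\Psi(t)\|_{q_k}$; the pointwise bound then passes the gain on to $\|\Psi(t)\|_{q_{k+1}}\le C(\|\Psi(t)\|_1+1)$, the constant being updated at each step but still depending only on $\tau$, $\gamma$ and the initial data. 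Since $1/q_k$ strictly decreases by $2/N$ at each step, after at most $\lceil N/2\rceil$ iterations we reach $q_k$ with $2q_k>N$, at which point $W^{2,q_k}(\Omega)\hookrightarrow C(\bar\Omega)$ yields $\|\psi(t)\|_\infty\le C(\|\Psi(t)\|_1+1)$, and one last application of the pointwise inequality concludes the proof.

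The main point requiring care is bookkeeping: I must verify that every constant along the finite chain of iterations depends only on $\tau$, $\gamma$, $|\Omega|$, and the initial data --- in particular through $v_*$ from~\eqref{e00} and through the constants $C_\varepsilon$ of Lemma~\ref{lem3a} and Lemma~\ref{lem3} --- and neither on $t\in[0,T_{\mathrm{max}})$ nor on any quantity such as $\|\Psi(t)\|_\infty$ that has not yet been controlled; this is precisely what makes the resulting bound uniform in time and usable in the sequel. In the low-dimensional cases $N\in\{1,2\}$, Lemma~\ref{lm2} already furnishes any $q_1<\infty$, so the bootstrap collapses into a single application of $W^{2,q_1}(\Omega)\hookrightarrow C(\bar\Omega)$ for an arbitrary $q_1>N/2$.
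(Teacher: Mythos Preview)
Your proof is correct and follows essentially the same approach as the paper: both start from the pointwise bound $\Psi\le C\,\mA^{-1}[\Psi]+C$ obtained by dropping $w\ge0$ in~\eqref{b1b} or~\eqref{b1b2}, and then bootstrap via elliptic regularity from $L^1$ to $L^\infty$. The only organizational difference is that the paper iterates the pointwise inequality itself (using the elliptic comparison principle to get $\Psi\le C^k\mA^{-k}[\Psi]+C$) and then applies a single higher-order Sobolev embedding $W^{2(k_0-1),p_1}(\Omega)\hookrightarrow L^\infty(\Omega)$, whereas you iterate at the level of $L^{q_k}$-norms using $W^{2,q_k}$-regularity at each step; both routes terminate in finitely many steps with constants depending only on $\tau$, $\gamma$, $\Omega$, and the initial data.
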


\begin{proof} 
Due to the non-negativity of $w$, one deduces from either~\eqref{b1b} (with $\varepsilon=\tau/2$) in Lemma~\ref{lem3a}, or~\eqref{b1b2} (with $\varepsilon=1/2$) in Lemma~\ref{lem3} that there is $C>0$ depending on $\tau$, $\gamma$ and the initial data such that 
\begin{equation*}
	 \Psi(t,x)\leq C\psi(t,x)+C=C\mathcal{A}^{-1}[\Psi](t,x)+C, \qquad  (t,x) \in  [0,T_{\text{max}}) \times\Omega.
\end{equation*}
Consequently, by the elliptic comparison principle, for each   $k\in\mathbb{N}\cup\{0\}$,
\begin{equation}\label{suc}
	\mA^{-k}[\Psi](t,x)\leq C\mA^{-k-1}[\Psi](t,x)+C,\qquad (t,x) \in  [0,T_{\text{max}}) \times\Omega.
\end{equation}
Iterating~\eqref{suc}, we are led to
\begin{equation}
   \Psi(t,x) \le C^k \mA^{-k}[\Psi](t,x) + \frac{C^{k+1}-1}{C-1},\qquad (t,x) \in  [0,T_{\text{max}}) \times\Omega. \label{X10}
\end{equation}

Next, we claim that there is a finite integer $k_0>0$ such that $\mA^{-k_0}[\Psi]\leq C\|\Psi\|_1$. Indeed, we infer from Lemma~\ref{lm2} that $\|\mA^{-1}[\Psi]\|_{p_1}\leq C(p_1)\|\Psi\|_1$ for any $p_1\in(1,N/(N-2)_+)$. Then classical regularity theory of elliptic equations indicates that $\|\mA^{-k}[\Psi]\|_{W^{2(k-1),p_1}}\leq C(k,p_1)\|\mA^{-1}[\Psi]\|_{p_1}$ for any integer $k\ge 1$. Now, choosing $k=k_0\triangleq (N+2)/2$ and noting that $W^{2(k_0-1),p_1}(\Omega)$ embeds continuously in $L^\infty(\Omega)$, we end up with 
\begin{equation*}
   \|\mA^{-k_0}[\Psi]\|_\infty\leq C(k_0)\|\Psi\|_1. 
\end{equation*}
Combining the above inequality with~\eqref{X10} (with $k=k_0$) completes the proof.
\end{proof}

According to Lemma~\ref{lem3a}, Lemma~\ref{lem3} and  Lemma~\ref{prop1}, we finally arrive at the main result of this section.

\begin{proposition} \label{cor1}Under the assumptions of  Lemma~\ref{prop1}, there is a positive constant $C_0>0$ depending on $\tau$, $\gamma$ and the initial data such that 
\begin{equation*}
	\|v(t)\|_\infty\leq C_0\|\Psi(t)\|_1+C_0, \qquad t\in [0,T_{\mathrm{max}}).
\end{equation*}
In addition, if $\gamma_\infty=\infty$, then for any $\varepsilon>0$, there is $C_\varepsilon>0$ depending on $\varepsilon$, $\tau$, $\gamma$ and the initial data such that 
\begin{equation*}
	\|v(t)\|_\infty\leq \varepsilon\|\Psi(t)\|_1+C_\varepsilon, \qquad t\in [0,T_{\mathrm{max}}).
\end{equation*}
\end{proposition}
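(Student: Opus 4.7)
The plan is simply to chain the pointwise-in-$x$ control of $v$ by $\Psi$ established in Lemmas~\ref{lem3a} and~\ref{lem3} with the $L^\infty$-to-$L^1$ reduction for $\Psi$ provided by Lemma~\ref{prop1}; no further analysis is required beyond bookkeeping of constants.

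For the first inequality, under the assumptions of Lemma~\ref{prop1}, I would fix a convenient value of the small parameter (say $\varepsilon = 1/2$) in Lemma~\ref{lem3} to obtain a pointwise bound of the form
\begin{equation*}
v(t,x) \leq \lambda\, \Psi(t,x) + K,\qquad (t,x)\in [0,T_{\mathrm{max}})\times\bar\Omega,
\end{equation*}
for some constants $\lambda, K > 0$ depending only on $\tau$, $\gamma$ and the initial data (or the analogous bound with $\varepsilon = 1$ in Lemma~\ref{lem3a} in the monotone non-decreasing unbounded case). Taking the supremum in $x$ and then applying Lemma~\ref{prop1} yields
\begin{equation*}
\|v(t)\|_\infty \leq \lambda\|\Psi(t)\|_\infty + K \leq \lambda C \|\Psi(t)\|_1 + \lambda C + K,
\end{equation*}
where $C$ is the constant produced by Lemma~\ref{prop1}, which is the first claimed estimate with $C_0 := \lambda C + K$.

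For the refined inequality in the case $\gamma_\infty = \infty$, the key point is that both~\eqref{rem1a} from Lemma~\ref{lem3} and~\eqref{b1a} from Lemma~\ref{lem3a} furnish a pointwise bound of $v$ by $\Psi$ whose prefactor can be made arbitrarily small. Given $\varepsilon > 0$, I would apply either of these bounds with parameter $\varepsilon' := \varepsilon / C$, where $C$ is again the constant from Lemma~\ref{prop1}, to produce $v(t,x) \leq (\varepsilon/C)\,\Psi(t,x) + C_{\varepsilon'}$. Taking the supremum and invoking Lemma~\ref{prop1} then gives
\begin{equation*}
\|v(t)\|_\infty \leq (\varepsilon/C)\|\Psi(t)\|_\infty + C_{\varepsilon'} \leq \varepsilon\|\Psi(t)\|_1 + \varepsilon + C_{\varepsilon'},
\end{equation*}
so that $C_\varepsilon := \varepsilon + C_{\varepsilon'}$ does the job. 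There is no genuine obstacle in this argument — the substantive work is already contained in the preceding three lemmas; the only subtlety is in the order in which one fixes the small parameter in the pointwise bound (it must be rescaled by the universal constant $C$ from Lemma~\ref{prop1}) so that the coefficient of $\|\Psi(t)\|_1$ comes out exactly $\varepsilon$ rather than a multiple thereof.
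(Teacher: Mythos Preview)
Your proposal is correct and follows exactly the paper's own argument: the paper's proof is a one-line reference stating that the first estimate follows from~\eqref{b1a2} combined with Lemma~\ref{prop1}, and the second from~\eqref{b1a}, \eqref{rem1a} and Lemma~\ref{prop1}. Your only addition is the explicit bookkeeping of the constants (rescaling $\varepsilon$ by the constant $C$ from Lemma~\ref{prop1}), which is precisely the routine step the paper leaves implicit.
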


\begin{proof}
The first statement readily follows from~\eqref{b1a2} and Lemma~\ref{prop1}, while the second one is an immediate consequence of~\eqref{b1a}, \eqref{rem1a} and Lemma~\ref{prop1}.
\end{proof}

\section{Uniform-in-time $L^1$-bounds for $\Psi$}\label{sec.5}

According to Proposition~\ref{cor1}, in order to prove the uniform boundedness of $v$, it suffices to derive a uniform $L^1$-bound for $\Psi$, which is achieved by the following delicate energy estimates.

\subsection{Monotone non-decreasing motility functions} \label{sec.5.1}

First, we consider the case $\gamma'\geq0$, a feature which allows us to construct a Lyapunov functional. Before stating our result, let us recall that, given $z\in L^2(\Omega)$ with
\begin{equation*}
    \langle z \rangle \triangleq \frac{1}{|\Omega|} \int_\Omega z(x)\rd x = 0,
\end{equation*}
there is a unique solution $\mathcal{K}[z]\in H^2(\Omega)$ to
\begin{subequations}\label{mK}
\begin{equation}
    - \Delta \mK[z] = z \;\;\;\;\text{ in }\;\;\Omega, \quad \nabla\mK[z]\cdot \mathbf{n} = 0\;\;\;\;\text{ on }\;\;\partial\Omega, \label{mK1}
\end{equation}
satisfying
\begin{equation}
    \langle \mK[z] \rangle = 0. \label{mK2}
\end{equation}
\end{subequations}

\begin{lemma}\label{lem.lf}
Recall that $m=\|u^{in}\|_1/|\Omega| = \langle u^{in}\rangle$, see~\eqref{e0}, and set
\begin{equation*}
	\mathcal{I}(u,v) \triangleq \frac{1}{2} \|\nabla \mathcal{K}[u-m]\|_2^2 + m \tau  \|\Gamma(v)\|_1 - m \gamma(m) \tau \|v\|_1. 
\end{equation*}
Then there holds
	\begin{equation}
		\frac{\rd}{\rd t} \mathcal{I}(u(t),v(t)) + \mathcal{D}(u(t),v(t)) = 0, \qquad t\in [0,T_{\mathrm{max}}), \label{D1} 
	\end{equation}
	with
	\begin{align*}
		\mathcal{D}(u,v) & \triangleq m \int_\Omega \gamma'(v) |\nabla v|^2\ \rd x + \int_\Omega \gamma(v) (u-m)^2\ \rd x \\
		& \qquad + m \int_\Omega (v-m)(\gamma(v)-\gamma(m))\ \rd x \ge 0.
	\end{align*}
\end{lemma}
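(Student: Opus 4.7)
The plan is to first verify the non-negativity of $\mathcal{D}$, and then differentiate each of the three terms of $\mathcal{I}$ and show they combine to $-\mathcal{D}$ via careful integration by parts using the PDEs~\eqref{ks1}--\eqref{ks2} together with the mass conservation~\eqref{e0}.

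For the non-negativity, I would argue term by term: the first integral in $\mathcal{D}$ is non-negative because $\gamma'\geq 0$; the second because $\gamma>0$; and the third because the monotonicity of $\gamma$ forces $(v-m)$ and $(\gamma(v)-\gamma(m))$ to have the same sign pointwise, so the integrand is non-negative. This uses only the hypotheses \eqref{hypgam1} and $\gamma'\ge 0$.

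For the identity~\eqref{D1}, the main step is to treat each of the three pieces of $\mathcal{I}$ separately. Setting $\phi(t) \triangleq \mK[u(t)-m]$, mass conservation~\eqref{e0} yields $\langle\partial_t u\rangle = 0$, so $\langle\partial_t\phi\rangle=0$ and $-\Delta\partial_t\phi=\partial_t u$ with homogeneous Neumann boundary conditions. Integration by parts and~\eqref{ks1} then give
\begin{equation*}
\frac{1}{2}\frac{\rd}{\rd t}\|\nabla\phi\|_2^2 = \int_\Omega \nabla\phi\cdot\nabla\partial_t\phi\ \rd x = \int_\Omega \phi\,\partial_t u\ \rd x = \int_\Omega \phi\,\Delta(u\gamma(v))\ \rd x = -\int_\Omega (u-m)u\gamma(v)\ \rd x.
\end{equation*}
For the second piece, I would use $\tau\partial_t v = \Delta v - v + u$ from~\eqref{ks2} and integrate by parts on the Laplacian:
\begin{equation*}
\frac{\rd}{\rd t}\big(m\tau\|\Gamma(v)\|_1\big) = m\int_\Omega \gamma(v)(\Delta v - v + u)\ \rd x = -m\int_\Omega \gamma'(v)|\nabla v|^2\ \rd x - m\int_\Omega v\gamma(v)\ \rd x + m\int_\Omega u\gamma(v)\ \rd x.
\end{equation*}
For the third piece, integrating~\eqref{ks2} over $\Omega$ (the Laplacian drops by Neumann BC) and using $\|u\|_1 = m|\Omega|$ gives $\tau \frac{\rd}{\rd t}\|v\|_1 = -\|v\|_1 + m|\Omega|$, hence
\begin{equation*}
\frac{\rd}{\rd t}\big(-m\gamma(m)\tau\|v\|_1\big) = m\gamma(m)\int_\Omega (v-m)\ \rd x.
\end{equation*}

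The final step is algebraic. Expanding $u(u-m) = (u-m)^2 + m(u-m)$ in the first piece, the terms $\pm m\int u\gamma(v)\,\rd x$ cancel between the first and second pieces, leaving
\begin{equation*}
\frac{\rd}{\rd t}\mathcal{I} = -\int_\Omega \gamma(v)(u-m)^2\ \rd x - m\int_\Omega \gamma'(v)|\nabla v|^2\ \rd x + m^2\int_\Omega \gamma(v)\ \rd x - m\int_\Omega v\gamma(v)\ \rd x + m\gamma(m)\int_\Omega (v-m)\ \rd x,
\end{equation*}
and the last three terms collapse to $-m\int_\Omega (v-m)(\gamma(v)-\gamma(m))\ \rd x$, yielding $\frac{\rd}{\rd t}\mathcal{I} = -\mathcal{D}$. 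There is no serious obstacle here — the calculation is routine once one recognizes that the Neumann-Poisson kernel $\mK$ converts $\frac{1}{2}\frac{\rd}{\rd t}\|\nabla\mK[u-m]\|_2^2$ into $\int\phi\partial_t u\,\rd x$, and the only subtlety is keeping track of the mass-conservation identity $\int u\,\rd x = m|\Omega|$ which ensures that $\partial_t\phi$ is well-defined with zero mean.
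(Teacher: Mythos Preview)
Your proof is correct and follows essentially the same approach as the paper: differentiate each of the three terms in $\mathcal{I}$ using~\eqref{ks1}, \eqref{ks2}, and mass conservation, integrate by parts, and combine. The paper organizes the algebra slightly differently (grouping around $(u-m)$ and $(v-m)$ earlier rather than at the end), but the computation and the justification of $\mathcal{D}\ge 0$ from $\gamma'\ge 0$ are the same.
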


\begin{proof}First, the non-negativity of $\mathcal{D}$ follows from the monotonicity of $\gamma$. By~\eqref{ks1} and~\eqref{mK}, a direct computation yields that
\begin{align*}
	\frac12\frac{\rd}{\rd t}\|\nabla\mathcal{K}[u-m]\|_2^2=&\int_\Omega u\gamma(v)(m-u)\ \rd x\\
	=&-\int_\Omega \gamma(v)(u-m)^2 \ \rd x-m\int_\Omega \gamma(v)(u-m)\ \rd x,
\end{align*} 
while, using~\eqref{ks2},
\begin{align*}
	m\tau\frac{\rd}{\rd t}\int_\Omega \Gamma(v)\ \rd x&=m\int_\Omega \tau\gamma(v)\partial_t v\ \rd x\\
	&=m\int_\Omega\gamma(v)(u-v+\Delta v)\ \rd x\\
	&=m\int_\Omega \gamma(v)(u-m)\ \rd x-m\int_\Omega\gamma'(v)|\nabla v|^2\ \rd x-m\int_\Omega \gamma(v)(v-m)\ \rd x\\
	&=m\int_\Omega \gamma(v)(u-m)\ \rd x-m\int_\Omega\gamma'(v)|\nabla v|^2\ \rd x-m\int_\Omega \big(\gamma(v)-\gamma(m)\big)(v-m)\ \rd x\\
	&\quad - m\gamma(m)\int_\Omega(v-m)\ \rd x.
\end{align*}
Also, by integration of~\eqref{ks2} over $\Omega$, together with \eqref{e0},
\begin{equation*}
	\tau\frac{\rd}{\rd t}\int_\Omega v \ \rd x= -\int_\Omega (v-u) \ \rd x=- \int_\Omega (v-m) \ \rd x.
\end{equation*}
Collecting the above identities gives~\eqref{D1} and completes the proof.
\end{proof}

Thanks to the positivity of $\gamma$ and the non-negativity of $\gamma'$, the three terms involved in $\mathcal{D}(u,v)$ are non-negative, while the control~\eqref{nup} on $\|v\|_1$ guarantees that $\mathcal{I}(u,v)$ is bounded from below. Hence, there is a positive constant $C_1$ depending only on $\gamma$ and the initial data such that
\begin{equation}
	\begin{split}
		- C_1 \le \mathcal{I}(u(t),v(t) ) & \le \mathcal{I}(u^{in},v^{in}), \qquad t\in [0,T_{\mathrm{max}}), \\
		\int_0^{T_{\mathrm{max}}} \mathcal{D}(u(s),v(s))\ \mathrm{d}s & \le \mathcal{I}(u^{in},v^{in}) + C_1.
	\end{split} \label{P3}
\end{equation}

\begin{lemma}\label{lem1}
	There holds
	\begin{equation*}
		m\|\gamma(v(t))\|_1\leq\int_\Omega(\gamma(v(t))-\gamma(m))(v(t)-m)\rd x+m|\Omega|\gamma(2m), \qquad t\in [0,T_{\mathrm{max}}).
	\end{equation*}
\end{lemma}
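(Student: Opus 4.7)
The plan is to prove the inequality pointwise on a suitable decomposition of $\Omega$ and then integrate, exploiting throughout the monotonicity of $\gamma$. The crucial observation is that, since $\gamma' \geq 0$, the product $(\gamma(v)-\gamma(m))(v-m)$ is non-negative everywhere on $\Omega$: the two factors always have the same sign. This non-negativity will let me extend integrals from a subset of $\Omega$ to all of $\Omega$ without cost.

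First I would split $\Omega$ into the two sets $E_1 \triangleq \{x \in \Omega : v(t,x) \geq 2m\}$ and $E_2 \triangleq \{x \in \Omega : v(t,x) < 2m\}$. On $E_1$, one has $v-m \geq m > 0$ and $\gamma(v)-\gamma(m) \geq 0$, so multiplying these inequalities gives the pointwise bound
\begin{equation*}
    m\bigl(\gamma(v)-\gamma(m)\bigr) \leq \bigl(\gamma(v)-\gamma(m)\bigr)(v-m),
\end{equation*}
which rearranges to $m\gamma(v) \leq (\gamma(v)-\gamma(m))(v-m) + m\gamma(m)$ on $E_1$. On $E_2$, the monotonicity of $\gamma$ gives directly $m\gamma(v) \leq m\gamma(2m)$.

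Integrating the first bound over $E_1$ and the second over $E_2$ and summing, I would obtain
\begin{equation*}
    m\|\gamma(v(t))\|_1 \leq \int_{E_1} \bigl(\gamma(v)-\gamma(m)\bigr)(v-m)\,\mathrm{d}x + m\gamma(m)|E_1| + m\gamma(2m)|E_2|.
\end{equation*}
To finish, I would use the non-negativity of $(\gamma(v)-\gamma(m))(v-m)$ on $E_2$ to enlarge the first integral to all of $\Omega$, and I would use $\gamma(m)\leq\gamma(2m)$ to bound $m\gamma(m)|E_1|+m\gamma(2m)|E_2| \leq m\gamma(2m)|\Omega|$. This gives precisely the claimed inequality.

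There is no real obstacle here: the argument is a one-line sign-splitting trick once the threshold $2m$ is chosen, and every inequality used is an immediate consequence of $\gamma' \geq 0$.
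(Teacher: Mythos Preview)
Your proof is correct and is essentially the same argument as the paper's: both split $\Omega$ at the threshold $v=2m$, use $v-m\ge m$ and $\gamma(v)\ge\gamma(m)$ on $\{v\ge 2m\}$ to compare $m\gamma(v)$ with $(\gamma(v)-\gamma(m))(v-m)$, bound $\gamma(v)\le\gamma(2m)$ on $\{v<2m\}$, and exploit the non-negativity of $(\gamma(v)-\gamma(m))(v-m)$ together with $\gamma(m)\le\gamma(2m)$ to combine the pieces. The only difference is cosmetic: the paper starts from the integral on the right and bounds it from below, whereas you start from $m\|\gamma(v)\|_1$ and bound it from above.
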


\begin{proof}
	Owing to the monotonicity of $\gamma$, 
	\begin{align*}
		& \int_\Omega (\gamma(v)-\gamma(m))(v-m)\ \mathrm{d}x \nonumber\\
		& \qquad \ge \int_\Omega \mathbf{1}_{[2m,\infty)}(v) (\gamma(v)-\gamma(m))(v-m)\ \mathrm{d}x \nonumber\\
		& \qquad\ge m \int_\Omega \mathbf{1}_{[2m,\infty)}(v) (\gamma(v)-\gamma(m))\ \mathrm{d}x \nonumber\\
		& \qquad = m \|\gamma(v)\|_1 - m \int_\Omega \mathbf{1}_{(0,2m)}(v) \gamma(v)\ \mathrm{d}x - m \gamma(m) \int_\Omega \mathbf{1}_{[2m,\infty)}(v)\ \mathrm{d}x \nonumber\\
		& \qquad \ge m \|\gamma(v)\|_1 - m \gamma(2m) \int_\Omega \mathbf{1}_{(0,2m)}(v)\ \mathrm{d}x - m \gamma(2m) \int_\Omega \mathbf{1}_{[2m,\infty)}(v)\ \mathrm{d}x \nonumber\\
		& \qquad \ge m \|\gamma(v)\|_1 - m \gamma(2m) |\Omega|. 
	\end{align*}
	This completes the proof.
\end{proof}

Now we are ready to derive a uniform bound for $\|\Psi\|_1$.

\begin{lemma}\label{L1a}Assume \eqref{hypgam1} and that  $\gamma'\geq0$. There is $C>0$ depending on $\gamma$, $\tau$ and the initial data such that
	\begin{equation*}
		\sup_{0\leq t<T_{\mathrm{max}}}\|\Psi(t)\|_1\leq C.
	\end{equation*}
\end{lemma}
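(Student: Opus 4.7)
The plan is to set up a linear scalar ODE for $\|\Psi(t)\|_1$ and feed into it the integrated-in-time dissipation captured by the Lyapunov functional of Lemma~\ref{lem.lf}. Integrating the equation~\eqref{Psi} over $\Omega$ and using the Neumann boundary condition yields
\begin{equation*}
\tau \frac{\rd}{\rd t}\|\Psi(t)\|_1 + \|\Psi(t)\|_1 = \int_\Omega u(t,x)\gamma(v(t,x))\,\rd x,\qquad \|\Psi(0)\|_1 = 0,
\end{equation*}
so the task reduces to dominating $\int_\Omega u\gamma(v)\,\rd x$ by a quantity which is integrable in time over $[0,T_{\mathrm{max}})$. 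Since \eqref{P3} provides $\int_0^{T_{\mathrm{max}}}\mathcal{D}(u(s),v(s))\,\rd s < \infty$, the natural target is an inequality of the form $\int_\Omega u\gamma(v)\,\rd x \leq C_1 \mathcal{D}(u,v) + C_2$.

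To achieve this, I would split $\int_\Omega u\gamma(v)\,\rd x = \int_\Omega (u-m)\gamma(v)\,\rd x + m\|\gamma(v)\|_1$ and treat the two pieces separately. The first term is controlled by Cauchy-Schwarz with weight $\gamma(v)$,
\begin{equation*}
\int_\Omega (u-m)\gamma(v)\,\rd x \leq \Bigl(\int_\Omega (u-m)^2 \gamma(v)\,\rd x\Bigr)^{1/2}\|\gamma(v)\|_1^{1/2} \leq \mathcal{D}(u,v)^{1/2}\|\gamma(v)\|_1^{1/2},
\end{equation*}
using that the quadratic weighted term sits inside $\mathcal{D}(u,v)$. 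The factor $\|\gamma(v)\|_1$ is in turn dominated by $\mathcal{D}$ via Lemma~\ref{lem1}: the inequality $m\|\gamma(v)\|_1 \leq \mathcal{D}(u,v)/m + m|\Omega|\gamma(2m)$ follows at once from the bound on $\int(v-m)(\gamma(v)-\gamma(m))\,\rd x$ provided by the definition of $\mathcal{D}$. A single application of Young's inequality then absorbs the $\mathcal{D}^{1/2}\|\gamma(v)\|_1^{1/2}$ term, yielding the desired estimate $\int_\Omega u\gamma(v)\,\rd x \leq C_1 \mathcal{D}(u,v) + C_2$ with constants depending only on $m$, $|\Omega|$, $\gamma$ and $\tau$.

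With this absorption in hand the ODE closes explicitly. Duhamel's formula gives
\begin{equation*}
\|\Psi(t)\|_1 = \frac{1}{\tau}\int_0^t e^{-(t-s)/\tau}\int_\Omega u(s,x)\gamma(v(s,x))\,\rd x\,\rd s \leq \frac{C_1}{\tau}\int_0^{T_{\mathrm{max}}} \mathcal{D}(u(s),v(s))\,\rd s + C_2,
\end{equation*}
and the integral on the right-hand side is finite by~\eqref{P3}. The main technical point is the absorption step: one needs the quadratic-in-$u$ control inside $\mathcal{D}$ together with the coercivity of $(v-m)(\gamma(v)-\gamma(m))$ from the monotonicity of $\gamma$, so that both pieces of the splitting of $\int_\Omega u\gamma(v)\,\rd x$ are controlled by the same functional whose time integral is already known to be finite. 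Everything else is routine manipulation of a scalar first-order linear ODE.
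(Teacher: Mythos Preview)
Your proposal is correct and follows essentially the same strategy as the paper: derive the scalar ODE for $\|\Psi\|_1$, split $\int_\Omega u\gamma(v)\,\rd x$ into $\int_\Omega (u-m)\gamma(v)\,\rd x + m\|\gamma(v)\|_1$, control both pieces by $\mathcal{D}(u,v)$ via the weighted quadratic term and Lemma~\ref{lem1}, and then close with the integrating factor and~\eqref{P3}. The only cosmetic difference is that the paper applies the weighted Young inequality $(u-m)\gamma(v)\le \tfrac14(u-m)^2\gamma(v)+\gamma(v)$ directly, whereas you route through Cauchy--Schwarz and then Young; the two are equivalent.
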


\begin{proof}First, an integration of \eqref{Psi} yields that
	\begin{equation}
		\tau \frac{\mathrm{d}}{\mathrm{d}t} \|\Psi\|_1 + \|\Psi\|_1 = \int_\Omega u\gamma(v)\ \mathrm{d}x. \label{P1}
	\end{equation} 
It then follows from~\eqref{P1}, Lemma \ref{lem1}, and Young's inequality that
\begin{align*}
	\tau \frac{\mathrm{d}}{\mathrm{d}t} \|\Psi\|_1 + \|\Psi\|_1 & = \int_\Omega (u-m) \gamma(v)\ \mathrm{d}x + m \|\gamma(v)\|_1 \\
	& \le \frac{1}{4} \int_\Omega (u-m)^2 \gamma(v)\ \mathrm{d}x + (m+1) \|\gamma(v)\|_1 \\
	& \le \int_\Omega (u-m)^2 \gamma(v)\ \mathrm{d}x + \frac{m+1}{m} \int_\Omega (\gamma(v)-\gamma(m))(v-m)\ \mathrm{d}x \\
	& \qquad + (m+1) \gamma(2m) |\Omega| \\
	& \le \left( 1 + \frac{m+1}{m^2} \right) \mathcal{D}(u,v) + (m+1) \gamma(2m) |\Omega|.
\end{align*}
Hence,
\begin{equation*}
	\frac{\mathrm{d}}{\mathrm{d}t} \left( e^{t/\tau} \|\Psi(t)\|_1 \right) \le \frac{C_2}{\tau} e^{t/\tau} (1+\mathcal{D}(u(t),v(t))), \qquad t\in [0,T_{\mathrm{max}}).
\end{equation*}
Integrating the above differential inequality with respect to time and using~\eqref{P3} give, for $t\in [0,T_{\mathrm{max}})$
\begin{align*}
	\|\Psi(t)\|_1 & \le C_2 (1-e^{-t/\tau}) + \frac{C_2}{\tau} \int_0^t e^{(s-t)/\tau} \mathcal{D}(u(s),v(s))\ \mathrm{d}s \\
	& \le  C_2 + \frac{C_2}{\tau} \int_0^{T_{\mathrm{max}}} \mathcal{D}(u(s),v(s))\ \mathrm{d}s \\
	& \le   C_2 + \frac{C_2}{\tau} (C_1 + \mathcal{I}(u^{in},v^{in})).
\end{align*}
This completes the proof.
\end{proof}

\subsection{Asymptotically non-degenerate motility functions}\label{sec.5.2}

We now turn to generic motility functions $\gamma$ satisfying~\eqref{hypgam1} and~\eqref{ginf}, but not necessarily monotone. In this case, the key step is to establish an estimate for a functional involving a nonlinear coupling term of the unknowns and the auxiliary functions.

\begin{lemma}\label{lem.PsiL1}
There is $K>0$ depending only on $\tau$, $\gamma$ and the initial data such that
\begin{equation*}
	\|\Psi(t)\|_1 \le K, \qquad t\in [0,T_{\mathrm{max}}).
\end{equation*}
\end{lemma}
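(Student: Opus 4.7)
My plan is to control $\|\Psi(t)\|_1$ by deriving a Gronwall-type differential inequality for a composite energy functional. The starting point is the $L^1$-balance
\[
\tau \frac{d}{dt}\|\Psi\|_1 + \|\Psi\|_1 = \int_\Omega u\gamma(v)\,dx, \tag{$\star$}
\]
obtained by integrating \eqref{Psi} over $\Omega$. The task is to control the production term $\int u\gamma(v)\,dx$ uniformly in time, but unlike in Section~\ref{sec.5.1}, the lack of monotonicity of $\gamma$ prevents the use of the Lyapunov identity of Lemma~\ref{lem.lf}. A genuinely nonlinear coupling between $\Psi$ and the auxiliary functions $w$ and $\psi$ is therefore needed.

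The first building block is obtained by multiplying the key identity \eqref{kid2} of Lemma~\ref{lmkid2} by the non-negative weight $u\gamma(v)$ and integrating, then using the pointwise comparison \eqref{b1a2} in the form $v \le a_\varepsilon \Psi + C_\varepsilon$ with $a_\varepsilon := \tau/(1+\varepsilon(\tau\gamma_\infty-1)) < \tau$ for $\varepsilon \in (0,1)$, together with the $L^\infty$-bound \eqref{etabd} on $\eta$, to produce the pivotal inequality
\[
(\tau - a_\varepsilon)\int_\Omega u\Psi\gamma(v)\,dx + \int_\Omega uw\gamma(v)\,dx \le \tau\int_\Omega u\psi\gamma(v)\,dx + C_\varepsilon'\int_\Omega u\gamma(v)\,dx. \tag{$\dagger$}
\]
The strict positivity of the coefficient $\tau - a_\varepsilon$ is the quantitative incarnation of the asymptotic non-degeneracy $\gamma_\infty > 1/\tau$, and is precisely what makes the argument go through.

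The second building block consists of a pair of coupled $L^2$-type derivative identities. Testing \eqref{Psi} against $\psi$ and \eqref{psi} against $\Psi$, and using the self-adjointness of $\mA$ together with $\mA[\psi] = \Psi$ and $\mA[w] = u$, one finds
\[
\tau\frac{d}{dt}\int_\Omega \psi\Psi\,dx + 2\|\Psi\|_2^2 = 2\int_\Omega u\psi\gamma(v)\,dx,
\]
while an analogous computation starting from \eqref{keyid} yields
\[
\tau\frac{d}{dt}\int_\Omega w\Psi\,dx + \tau\int_\Omega u\Psi\gamma(v)\,dx + \int_\Omega u\Psi\,dx = \int_\Omega uw\gamma(v)\,dx + \tau\int_\Omega u\psi\gamma(v)\,dx.
\]

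Finally, I would combine $(\star)$, $(\dagger)$ and the two coupled identities via a composite functional of the form $\mathcal{E}(t) = A\|\Psi\|_1 + B\int_\Omega w\Psi\,dx - D\int_\Omega \psi\Psi\,dx$ for positive constants $A, B, D$ tuned (together with the choice of $\varepsilon \in (0,1)$) so that the linear combination collapses to $\tau \mathcal{E}'(t) + \delta \mathcal{E}(t) \le K_0$ for some $\delta > 0$; Gronwall's lemma then yields the desired uniform bound $\|\Psi(t)\|_1 \le K$, after checking that $\mathcal{E}$ dominates a positive multiple of $\|\Psi\|_1$ up to an additive constant. The main obstacle is the delicate coefficient tuning: the indefinite-sign cross term $\int u\Psi\gamma(v)\,dx$, which cannot be controlled by any $L^\infty$-bound on $\gamma(v)$ when $\gamma$ is unbounded, must be absorbed using the positive coefficient $\tau - a_\varepsilon$ in $(\dagger)$, while simultaneously the dissipative $\|\Psi\|_2^2$-contribution from the $\psi\Psi$-identity must be harnessed to compensate for the loss of the non-negativity of $m\int \gamma'(v)|\nabla v|^2 + m\int (v-m)(\gamma(v) - \gamma(m))$ that was pivotal in the monotone case.
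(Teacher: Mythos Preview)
Your ingredients are all correct: the balance $(\star)$, the inequality $(\dagger)$, and both derivative identities check out. The difficulty is that the functional $\mathcal{E}=A\|\Psi\|_1+B\int w\Psi-D\int\psi\Psi$ cannot be tuned to close. When you compute $\tau\mathcal{E}'$, the $-D\int\psi\Psi$ contribution produces $+2D\|\Psi\|_2^2$ on the \emph{source} side, not the dissipation side. No other term absorbs it: the only candidate is $-\delta D\int\psi\Psi$ coming from $\delta\mathcal{E}$, but since $\psi=\mA^{-1}[\Psi]$ and $\mA\ge I$, one has $\int\psi\Psi=\|\mA^{-1/2}\Psi\|_2^2\le\|\Psi\|_2^2$, so the combination $2D\|\Psi\|_2^2-\delta D\int\psi\Psi$ stays nonnegative for any $\delta\le 2$ and is in any case not $\le K_0$. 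Flipping the sign of $D$ makes $\|\Psi\|_2^2$ dissipative but then the $\int u\psi\gamma(v)$ source acquires coefficient $\tau B+2|D|>0$ and, via $\tau\psi=w+\tau\Psi-v-\eta$, this regenerates an uncontrolled $\int uw\gamma(v)$ term. Either way, neither the differential inequality closes nor does $\mathcal{E}$ dominate $\|\Psi\|_1$ (indeed $\mathcal{E}$ can be arbitrarily negative because $\int(w-\tau\psi)\Psi\le -c\|\Psi\|_2^2+C\|\Psi\|_1$ by \eqref{b1b2}).

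The paper's resolution is structurally different. Instead of tracking $\int w\Psi$ and $\int\psi\Psi$ separately, it works with the single coupling $\int_\Omega u(\tau\psi-w+K_0)\,\rd x$, where $K_0$ is chosen via \eqref{b1b2} so that $\tau\psi-w+K_0\ge 0$ pointwise, guaranteeing nonnegativity from the outset. Crucially, a second quadratic term $\|\nabla\mathcal{K}[u-m]\|_2^2$ is included in the functional; its time derivative supplies the dissipation $\int u^2\gamma(v)\,\rd x$, which (i) cancels the matching term produced by $\frac{\rd}{\rd t}\int u(\tau\psi-w)$ after using the identity~\eqref{kid2} in the form $\mA[\tau\psi-w]=\tau\Psi-u$, and (ii) controls $\|\nabla\mathcal{K}[u-m]\|_2^2$ via Poincar\'e--Wirtinger since $\gamma\ge\gamma_*>0$. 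The term $\int u\Psi$ that emerges then recovers $\int u(\tau\psi-w+K_0)$ directly through~\eqref{kid2}, so every piece of the functional is regenerated with a favorable sign. No $\|\Psi\|_2^2$ ever appears. Your pivotal inequality $(\dagger)$ is close in spirit to how the paper exploits~\eqref{kid2}, but the paper multiplies by $u$ (inside a time derivative) rather than by $u\gamma(v)$, and pairs this with $\|\nabla\mathcal{K}[u-m]\|_2^2$ rather than with $\int\psi\Psi$; that pairing is the missing idea.
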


\begin{proof}
Since $\tau\gamma_\infty>1$ by~\eqref{ginf} and $\Psi$ is non-negative, it readily follows from~\eqref{b1b2} (with $\varepsilon=1/2$) that there is $K_0>0$ such that
	\begin{equation}
		\tau \psi - w \ge - K_0 \;\;\text{ in }\;\; (0,T_{\mathrm{max}})\times\Omega. \label{X1}
	\end{equation}
	We next recall that the following identities
	\begin{equation}
		\partial_t w + u \gamma(v) = \mathcal{A}^{-1}[u\gamma(v)] = \mathcal{L}[\psi] = \tau \partial_t \psi + \Psi \;\;\text{ in }\;\; (0,T_{\mathrm{max}})\times\Omega, \label{X2}
	\end{equation}
	which follow from~\eqref{Psi}, \eqref{keyid}, and the definition of $\psi$. On the one hand, we infer from~\eqref{ks1}, \eqref{kid2}, and~\eqref{X2} that
	\begin{align}
		\frac{\rd}{\rd t} \int_\Omega u (\tau\psi - w + K_0)\ \rd x & = \int_\Omega (\tau\psi - w + K_0) \Delta (u\gamma(v))\ \rd x + \int_\Omega u (\tau\partial_t \psi - \partial_t w)\ \rd x \nonumber\\
		& = \int_\Omega u\gamma(v) \Delta(\tau\psi-w)\ \rd x + \int_\Omega u(u\gamma(v)-\Psi)\ \rd x \nonumber\\
		& = - \int_\Omega u\gamma(v) \mathcal{A}[\tau\psi-w]\ \rd x + \int_\Omega u\gamma(v) (\tau\psi-w)\ \rd x \nonumber\\
		& \qquad + \int_\Omega u(u\gamma(v)-\Psi)\ \rd x \nonumber\\
		& = - \int_\Omega u\gamma(v) (\tau\Psi - u + w - \tau\psi - u)\ \rd x - \int_\Omega u \Psi\ \rd x \nonumber\\
		& = 2 \int_\Omega u^2 \gamma(v)\ \rd x - \int_\Omega u\gamma(v) (v+\eta)\ \rd x - \int_\Omega u\Psi\ dx ,\nonumber 
	\end{align}
    where we have also used the identity that  $\mathcal{A}[\tau\psi-w]=\tau\Psi-u$ following from the definitions of the auxiliary functions.	On the other hand, it follows from~\eqref{ks1}, \eqref{Psi}, and the definition~\eqref{mK} of $\mathcal{K}$ that
	\begin{equation*}
		\frac{\rd}{\rd t} \|\nabla\mathcal{K}[u-m]\|_2^2 + 2 \int_\Omega u^2 \gamma(v)\ \rd x = 2m \int_\Omega u\gamma(v)\ \rd x
	\end{equation*}
	and
	\begin{equation*}
		\tau \frac{\rd}{\rd t} \|\Psi\|_1 + \|\Psi\|_1 = \int_\Omega u\gamma(v)\ \rd x.
	\end{equation*}
	Combining the above differential equations and introducing
	\begin{equation*}
		\mathcal{F} := \|\nabla\mathcal{K}[u-m]\|_2^2 + \frac{1}{2} \int_\Omega u (\tau\psi - w + K_0)\ \rd x + \tau\|\Psi\|_1,
	\end{equation*}
	we note that $\mathcal{F}\ge 0$ by~\eqref{X1} and obtain
	\begin{equation}
		\begin{split}
			& \frac{\rd \mathcal{F}}{\rd t} + \int_\Omega u^2 \gamma(v)\ \rd x + \frac{1}{2} \int_\Omega uv\gamma(v)\ \rd x + \frac{1}{2} \int_\Omega u\Psi\ \rd x + \|\Psi\|_1 \\
			& \qquad = (2m+1) \int_\Omega u\gamma(v)\ \rd x - \frac{1}{2} \int_\Omega \eta u \gamma(v)\ \rd x.
		\end{split} \label{X4}
	\end{equation}
	Owing to~\eqref{etabd}, 
	\begin{align*}
		(2m+1) \int_\Omega u\gamma(v)\ \rd x - \frac{1}{2} \int_\Omega \eta u \gamma(v)\ \rd x & \le (2m+1+\|\eta\|_\infty) \int_\Omega u\gamma(v)\ \rd x \\
		& \le K_1  \int_\Omega u\gamma(v)\ \rd x,
	\end{align*}
	where $K_1 := 2m+1+ \max\{\|u^{in}\|_\infty , \|v^{in}\|_\infty\}$. Splitting the integral on the right hand side of the above inequality gives
	\begin{align*}
		(2m+1) \int_\Omega u\gamma(v)\ \rd x - \frac{1}{2} \int_\Omega \eta u \gamma(v)\ \rd x & \le K_1 \int_\Omega \left[ \mathbf{1}_{[v_*,2K_1]}(v) u \gamma(v) + \mathbf{1}_{(2K_1,\infty)}(v) u \gamma(v) \right]\ \rd x \\
		& \le K_1 \sup_{[v_*,2K_1]}\{\gamma\} \int_\Omega u\ \rd x + \frac{1}{2} \int_\Omega uv\gamma(v)\ \rd x \\
		& \le \frac{1}{2} \int_\Omega uv\gamma(v)\ \rd x + K_2,
	\end{align*}
	with $K_2 := m K_1 |\Omega| \sup_{[v_*,2K_1]}\{\gamma\}$. Thus, \eqref{X4} becomes
	\begin{equation}
		\frac{\rd\mathcal{F}}{\rd t} + \int_\Omega u^2 \gamma(v)\ \rd x + \frac{1}{2} \int_\Omega u\Psi\ \rd x + \|\Psi\|_1 \le K_2. \label{X5}
	\end{equation}
	We now note that the positivity of $\gamma_\infty$ and $\gamma$, along with the continuity of $\gamma$, entails that
	\begin{equation*}
		\gamma_* := \inf_{[v_*,\infty)}\{\gamma\}>0,
	\end{equation*}
	so that, by the Poincar\'e-Wirtinger inequality,
	\begin{align}
		\int_\Omega u^2 \gamma(v)\ \rd x & \ge \gamma_* \int_\Omega (u-m+m)^2\ \rd x \ge \gamma_* \int_\Omega \left[ \frac{(u-m)^2}{2} - m^2 \right]\ \rd x \nonumber\\
		& = \frac{\gamma_*}{2} \|u-m\|_2^2 - \gamma_* m^2 |\Omega| \ge \gamma_* K_3 \|\nabla\mathcal{K}[u-m]\|_2^2 - \gamma_* m^2 |\Omega| \label{X6}
	\end{align}
	for some $K_3>0$ depending only on $\Omega$. Also, by~\eqref{etabd} and~\eqref{kid2},
	\begin{align}
		\tau \int_\Omega u\Psi \ \rd x & = \int_\Omega u (v+\tau\psi - w +\eta)\ \rd x \nonumber\\
		& \ge \int_\Omega u(\tau\psi-w+K_0)\ \rd x - (K_0 + \|\eta\|_\infty) \int_\Omega u\ \rd x \nonumber\\
		& \ge \int_\Omega u(\tau\psi-w+K_0)\ \rd x - K_4, \label{X7}
	\end{align}
	with $K_4 := m (K_0+\max\{\|u^{in}\|_\infty , \|v^{in}\|_\infty \}) |\Omega|$.

Collecting~\eqref{X6} and~\eqref{X7}, we are led to the lower bound
\begin{equation*}
	\int_\Omega u^2 \gamma(v)\ \rd x + \frac{1}{2} \int_\Omega u\Psi\ \rd x + \|\Psi\|_1 \ge K_5 \mathcal{F} - K_6,
\end{equation*}
for some positive constants $K_5<1<K_6$ depending only on $\gamma$, $\tau$ and the initial data. Combining the just derived lower bound with~\eqref{X5}, we find
\begin{equation*}
	\frac{\rd\mathcal{F}}{\rd t} + K_5 \mathcal{F} \le K_7 := K_2 + K_6,
\end{equation*}
whence
\begin{equation*}
	\tau\|\Psi(t)\|_1 \le \mathcal{F}(t) \le \max\left\{\mathcal{F}(0), \frac{K_7}{K_5} \right\}, \qquad t\in [0,T_{\mathrm{max}}),
\end{equation*}
the lower bound being a consequence of~\eqref{X1}. This completes the proof.
\end{proof}

\section{Proofs of Theorem~\ref{TH1} and Theorem~\ref{TH2}}\label{sec.6}

\begin{proof}[Proof of Theorem~\ref{TH1}]
Under the assumption of Theorem~\ref{TH1}, we have already shown the uniform-in-time  boundedness of $v$ on $[0,T_{\mathrm{max}})$, according to Lemma~\ref{lem.PsiL1} and Proposition~\ref{cor1}, and the bound does not depend on $T_{\mathrm{max}}$. Then the uniform-in-time boundedness of $u$ on $[0,T_{\mathrm{max}})$ follows from Proposition~\ref{prop2.2} and, since the bound does not depend on $T_{\mathrm{max}}$, it also excludes the finiteness of $T_{\mathrm{max}}$, according to Proposition~\ref{local}. This completes the proof.
\end{proof}

\begin{proof}[Proof of Theorem~\ref{TH2}]
Assume that $\gamma'\geq0$. We first point out that, if $\gamma\in L^\infty(v_*,\infty)$, the positive lower bound on $\gamma(v)$ stemming from the positivity~\eqref{hypgam1} of $\gamma$ and~\eqref{e00} allows us to apply \cite[Theorem~1.2]{XiaoJiang2022} to conclude the proof.

Otherwise, $\lim\limits_{s\rightarrow\infty}\gamma(s)=\infty$. The proof of the global well-posedness and uniform-in-time boundedness statements in Theorem~\ref{TH2} is then the same as that of Theorem~\ref{TH1}, thanks to Lemma~\ref{L1a} and Proposition~\ref{cor1}.

The convergence to the homogeneous steady state can be verified in the same manner as done in \cite[Section~7.2]{JLZ2022}, to which we refer. This completes the proof.
\end{proof}

\section*{Acknowledgments}
\noindent JJ is supported by National Natural Science Foundation of China (NSFC)
under grants No.~12271505 \& No.~12071084. The work of PhL is partially funded by the Chinese Academy of Sciences President's International Fellowship Initiative Grant No.~2025PVA0101.

\bibliographystyle{siam}
\bibliography{KS2023}
\end{document}